\numberwithin{equation}{section}
\numberwithin{figure}{section}
\newtheoremstyle{mythm}{3pt}{3pt}{\itshape}{0pt}{\bfseries}{.}{0.5eM}{}
\theoremstyle{mythm}
\newtheorem{definition}{Definition}[section]
\newtheorem{thm}[definition]{Theorem}
\newtheorem{lem}[definition]{Lemma}
\newtheorem{cor}[definition]{Corollary}  
\newtheorem{prop}[definition]{Proposition}
\newtheoremstyle{myrem}{3pt}{3pt}{\normalfont}{0pt}{\bfseries}{.}{0.5em}{}
\theoremstyle{myrem}
\newtheorem{rem}[definition]{Remark}    
\newtheorem{example}[definition]{Example}
\newcounter{paranum}[section]
\newcommand{\mc}{\mathcal}
\renewcommand{\:}{\colon}
\newcommand{\Fol}{\mathcal{F}}
\renewcommand{\limsup}{\varlimsup}
\renewcommand{\liminf}{\varliminf}
\newcommand{\I}{\mathbb I}
\newcommand{\ZF}{[\Z]^{<\omega}}
\newcommand{\bb}{\mathbf b}
\newcommand{\dd}{\mathbf d}
\newcommand{\vv}{\mathbf v}
\newcommand{\uu}{\mathbf u}
\newcommand{\ww}{\mathbf w}
\newcommand{\ssq}{\ensuremath{\subseteq}}
\newcommand{\eps}{\ensuremath{\varepsilon}}
\newcommand{\Id}{\ensuremath{\mathrm{Id}}}
\newcommand{\supp}{\ensuremath{\mathrm{supp}}}
\newcommand{\alphlist}{\begin{list}{(\alph{enumi})}{\usecounter{enumi}\setlength{\parsep}{2pt}
      \setlength{\itemsep}{1pt} \setlength{\topsep}{5pt}
      \setlength{\partopsep}{3pt}}}
\newcommand{\arablist}{\begin{list}{(\arabic{enumi})}{\usecounter{enumi}\setlength{\parsep}{2pt}
          \setlength{\itemsep}{1pt} \setlength{\topsep}{5pt}
          \setlength{\partopsep}{3pt}}}
\newcommand{\romanlist}{\begin{list}{(\roman{enumi})}{\usecounter{enumi}\setlength{\parsep}{2pt}
              \setlength{\itemsep}{1pt} \setlength{\topsep}{5pt}
              \setlength{\partopsep}{3pt}}}
\newcommand{\Romanlist}{\begin{list}{(\Roman{enumi})}{\usecounter{enumi}\setlength{\parsep}{2pt}
              \setlength{\itemsep}{1pt} \setlength{\topsep}{5pt}
              \setlength{\partopsep}{3pt}}}
\newcommand{\bulletlist}{\begin{list}{$\bullet$}{\setlength{\parsep}{2pt}
                \setlength{\itemsep}{1pt} \setlength{\topsep}{5pt}
                \setlength{\partopsep}{3pt}\setlength{\leftmargin}{15pt}}} 
\newcommand{\Alphlist}{\begin{list}{(\Alph{enumi})}{\usecounter{enumi}\setlength{\parsep}{2pt}
      \setlength{\itemsep}{1pt} \setlength{\topsep}{5pt}
      \setlength{\partopsep}{3pt}}}
 \newcommand{\listend}{\end{list}}
\newcommand{\N}{\ensuremath{\mathbb{N}}} 
\newcommand{\R}{\ensuremath{\mathbb{R}}}
\newcommand{\Z}{\ensuremath{\mathbb{Z}}}
\title{On the continuity of Følner averages}
\author[Gabriel Fuhrmann]{Gabriel Fuhrmann}
\author[Maik Gr\"{o}ger]{Maik Gröger}
\author[Till Hauser]{Till Hauser}
\address[Gabriel Fuhrmann]{Department of Mathematical Sciences, Durham University, UK}
\email{gabriel.fuhrmann@durham.ac.uk}
\address[Maik Gr\"{o}ger]{Faculty of Mathematics and Computer Science, Jagiellonian University in Krakow, Poland
and FB03 - Mathematik und Informatik, University of Bremen, Germany}
\email{maik.groeger@im.uj.edu.pl}
\address[Till Hauser]{Facultad de Matemáticas, Pontificia Universidad Católica de Chile, Chile and Max-Planck-Institut für Mathematik - Bonn, Germany}
\email{hauser.math@mail.de}
\thanks{The research leading to these results has received funding from the Norwegian
Financial Mechanism 2014-2021 via the POLS grant no. 2020/37/K/ST1/02770.
This article was funded by the Deutsche Forschungsgemeinschaft (DFG, German Research Foundation) - 530703788.
MG is very grateful to D.~Kwietniak and H.~Pourmand for
several very helpful discussions concerning the notion of weak mean equicontinuity.
TH is thankful to M.~Schneider for the very valuable discussions
regarding thin F\o lner sequences and the appendix of this article.
The authors also thank the anonymous referee for valuable feedback and helpful suggestions.}
\begin{document}

\begin{abstract}
  It is known that if each point $x$ of a dynamical system is generic for some invariant measure $\mu_x$, then there is a strong connection between certain ergodic and topological properties of that system.
  In particular, if the acting group is abelian and the map $x\mapsto \mu_x$ is continuous, then every orbit closure is uniquely ergodic.
  
  In this note, we show that if the acting group is not abelian, 
  orbit closures may well support more than one ergodic measure even if $x\mapsto \mu_x$ is continuous.
  We provide examples of such a situation via actions of the
  group of all orientation-preserving homeomorphisms on the unit interval as well
  as the Lamplighter group.
  To discuss these examples, we need to extend the
  existing theory of weakly mean equicontinuous
  group actions to allow for multiple ergodic measures on orbit closures and
  to allow for actions of general amenable groups.
  These extensions are achieved by adopting an operator-theoretic approach.
\end{abstract}

\maketitle

\section{Introduction}

Temporal averages and their relation to invariant measures of a dynamical system
are fundamental in ergodic theory.
A classical notion intimately linked to this relationship is that of
generic points \cite{KryloffBogoliouboff1937}.
Given a dynamical system $(X,f)$ with $X$ a compact metric space
and  $f: X\to X$ a homeomorphism, a point $x\in X$ is called \emph{generic} for a 
measure $\mu$ (which is invariant under $f$) if for every continuous
function $\varphi: X\to\R$
\[
  \lim\limits_{n\to\infty}\frac{1}{n}\sum_{k=0}^{n-1} \varphi(f^k(x))=\int\limits_X\! \varphi \,d\mu.
\]

It is well known that for each ergodic measure $\mu$ there is
a set of full $\mu$-measure all of whose elements are 
generic for $\mu$. 
One may ask what happens if all points of
a dynamical system are generic for some invariant measure.
To the authors' knowledge, the first to investigate this question were Dowker and Lederer \cite{DowkerLederer1964}.
As they point out, already the classical theory of Kryloff and Bogoliouboff gives that genericity of every point together with minimality of the system implies unique ergodicity, see also \cite{Oxtoby1952}.
Indeed, weakening the assumption of minimality, they prove the following intriguing 
statement.
	
\begin{thm}[\cite{DowkerLederer1964}]
  Assume that $(X,f)$ has a unique minimal set and that all points are generic
  for some invariant measure.
  Then $(X,f)$ is either uniquely ergodic or there exist infinitely many
  ergodic measures.
\end{thm}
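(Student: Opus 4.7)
The plan is to argue by contradiction: assume $(X,f)$ has exactly $k$ ergodic measures $\nu=\mu_1,\mu_2,\ldots,\mu_k$ with $2\le k<\infty$. The set-up is standard. Every point of $M$ is generic (inherited from $X$) and $M$ is minimal, so the classical Kryloff--Bogoliouboff argument quoted in the introduction yields unique ergodicity of $(M,f|_M)$, with measure $\nu$. Since every closed invariant subset of $X$ contains a minimal subset and $M$ is the only one, $\mathrm{supp}(\mu)\supseteq M$ for every invariant measure $\mu$; ergodicity of $\mu_i$ applied to the invariant set $M$ gives $\mu_i(M)\in\{0,1\}$ for each $i\ge 2$, and the value $1$ is ruled out by unique ergodicity on $M$. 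Hence $\mu_i(M)=0$ and $\mathrm{supp}(\mu_i)\supsetneq M$ for every $i\ge 2$.

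The main strategy is to exploit the recurrence of a non-trivial orbit to $M$. Take a $\mu_2$-generic point $x_0\in X$; its orbit is dense in $\mathrm{supp}(\mu_2)\supsetneq M$, so after passing to a subsequence $f^{n_j}(x_0)\to m\in M$. Write $E_N(x)=\tfrac{1}{N}\sum_{k<N}\delta_{f^k(x)}$. By uniform continuity of iterates, for $N$ at most some shadowing time $\tau_j\to\infty$ one has $E_N(f^{n_j}(x_0))\approx E_N(m)$, and hence these averages approach $\nu$; whereas for $N$ large compared with $\tau_j$, $E_N(f^{n_j}(x_0))\to\mu_{f^{n_j}(x_0)}=\mu_2$. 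A diagonal choice of windows $N_j$ thus realises, in principle, any convex combination $c\nu+(1-c)\mu_2$ as a weak-$\ast$ limit point of empirical measures along the orbit of $x_0$.

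The crux---and what I expect to be the main obstacle---is turning these orbit-window limits into Birkhoff averages of actual points of $X$ so as to force ergodic measures beyond $\mu_1,\ldots,\mu_k$. Along the orbit itself the value $\mu_{f^n(x_0)}=\mu_2$ is constant, so the desired contradiction has to come from the interaction between the discontinuity of $x\mapsto\mu_x$ near $M$ and the invariant partition $X=A_1\sqcup\cdots\sqcup A_k\sqcup L$, where $A_i=\{x:\mu_x=\mu_i\}$ and $L$ is the set of $x$ generic for a non-ergodic measure. Two natural routes present themselves: either exploit that $x\mapsto\mu_x$ is Baire class~$1$, hence continuous on a dense $G_\delta$, and combine this with the invariance and measurability of the $A_i$ to rule out finitely many non-$\nu$ ergodics topologically; or use the diagonal construction above to exhibit $\nu$ as a weak-$\ast$ accumulation point of a sequence of ergodic measures distinct from $\nu$, contradicting the assumption that $\mathcal{E}(X)$ is finite. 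The uniqueness of the minimal set is essential in either route since it provides the single anchor $\nu$ to which all near-$M$ averages converge; without it, competing minimal sets would allow distinct asymptotic measures near different minimal components and the diagonal interpolation could not be run cleanly.
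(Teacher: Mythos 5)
The paper itself gives no proof of this statement---it is quoted from Dowker and Lederer as background---so your attempt can only be measured against the known argument. Your opening moves are correct and standard: the unique minimal set $M$ is uniquely ergodic with measure $\nu$ (Kryloff--Bogoliouboff plus genericity of all points of $M$), every other ergodic measure $\mu_i$ satisfies $\mu_i(M)=0$ while $\operatorname{supp}(\mu_i)\supseteq M$, and the forward orbit of a $\mu_2$-generic point $x_0$ therefore accumulates on $M$, producing the non-commuting double limit $\lim_N\lim_j E_N(y_j)=\nu$ versus $\lim_j\lim_N E_N(y_j)=\mu_2$. But the argument stops exactly where the theorem begins: you name two possible completions without carrying out either, and the one you describe most concretely---exhibiting $\nu$ as a weak-$\ast$ accumulation point of ergodic measures distinct from $\nu$---does not follow from the diagonal construction. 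The diagonal limits $\lim_l E_{N_l}(y_{j_l})$ are merely invariant measures in the connected set swept out by the slowly varying empirical measures; a limit of the form $c\nu+(1-c)\mu_2$ with $0<c<1$ is not ergodic and already lies in the $(k-1)$-simplex spanned by $\mu_1,\dots,\mu_k$, so its existence contradicts nothing.

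A telling symptom is that your main strategy never actually uses the hypothesis $k<\infty$. Systems satisfying all the hypotheses and carrying infinitely many ergodic measures do exist (this is precisely the second alternative of the theorem), so no argument that ignores finiteness can reach a contradiction. In the Dowker--Lederer and Katznelson--Weiss line of reasoning, finiteness enters essentially: for instance, one chooses a single continuous $\phi$ with $\nu(\phi)=1$ and $\mu_i(\phi)<1/2$ simultaneously for all $i\geq 2$ (possible only because there are finitely many $\mu_i$, each giving $M$ measure zero) and studies the invariant, Baire-class-one function $x\mapsto\mu_x(\phi)$ on the closure of the basin of $\mu_2$; alternatively, one performs a nested construction of a point $z$ in the orbit closure of $x_0$ whose forward orbit alternates between prescribed, rapidly growing stretches shadowing $M$ and stretches of $\mu_2$-typical behaviour, forcing $E_N(z)$ to oscillate and hence $z$ to be non-generic. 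Either route requires genuinely new work---controlling the lengths of the excursions of the orbit of $x_0$ near $M$ and away from it, and showing that the limit point inherits the prescribed statistics uniformly in the approximating sequence---none of which appears in your sketch. As written, this is a correct reduction together with an honest description of where the difficulty lies, not a proof.
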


In 1981, Katznelson and Weiss \cite{KatznelsonWeiss1981} improved on this
result by showing that in the second case, there must exist uncountably many
ergodic measures.
Against the backdrop of this interesting interplay between ergodic and topological properties, it is natural to look for other assumptions which
have similar structural consequences when all points 
are generic. 

A most natural such assumption is continuity of the map $x\mapsto \mu_x$ which sends
each point $x\in X$ to the invariant measure $\mu_x$ it is generic for.
A priori, we can expect one of the following (non-exclusive) cases:
(a) no further rigidity (some but not all $\mu_x$ are ergodic), (b) all $\mu_x$ are ergodic
or (c) each orbit closure is uniquely ergodic.
Recent work of Downarowicz and Weiss \cite{DownarowiczWeiss2020},
Cai, Kwietniak, Li and Pourmand \cite{CaiKwietniakLiPourmand2022} as well 
as Xu and Zheng \cite{XuZheng2022}, shows the following.

\begin{thm}[\cite{DownarowiczWeiss2020,CaiKwietniakLiPourmand2022,XuZheng2022}]\label{thm:pointwise_uniq_erg}
  Suppose all points of $(X,f)$ are generic and that the map $x\mapsto\mu_x$
  is continuous.
  Then each orbit closure of $(X,f)$ is uniquely ergodic.
\end{thm}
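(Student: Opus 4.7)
The plan is to deduce the result from two observations: (i) the generic-measure map $x\mapsto\mu_x$ is constant on every orbit closure, and (ii) the ergodic averages appearing in the definition of genericity converge \emph{pointwise everywhere} on $X$, not merely almost everywhere with respect to some reference measure. Together these pin down any invariant measure supported on an orbit closure.

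First I would verify that $\mu_{f(x)}=\mu_x$ for every $x\in X$. This is immediate from the definition: for any continuous $\varphi$, the Birkhoff averages at $x$ and $f(x)$ differ by the boundary term $\tfrac{1}{n}\bigl(\varphi(f^{n}(x))-\varphi(x)\bigr)$, which tends to $0$, so the limits coincide. Combined with the assumed continuity of $x\mapsto\mu_x$, this forces the map to be constant on the closure of every orbit: if $y$ lies in $\overline{\{f^{n}(x_{0}):n\in\mathbb Z\}}$, pick $n_{k}$ with $f^{n_{k}}(x_{0})\to y$ and conclude $\mu_{y}=\lim_{k}\mu_{f^{n_{k}}(x_{0})}=\mu_{x_{0}}$.

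Now fix an orbit closure $Y$ and denote by $\mu$ the common value of $\mu_{y}$ for $y\in Y$. Let $\nu$ be any $f$-invariant Borel probability measure on $X$ supported in $Y$, and let $\varphi\colon X\to\mathbb R$ be continuous. By $f$-invariance of $\nu$,
\[
  \int\varphi\,d\nu \;=\; \int\frac{1}{n}\sum_{k=0}^{n-1}\varphi(f^{k}(x))\,d\nu(x).
\]
The integrands are bounded uniformly by $\|\varphi\|_{\infty}$ and, by the genericity hypothesis applied at every $x\in Y$, converge to the constant value $\int\varphi\,d\mu$. Dominated convergence therefore yields $\int\varphi\,d\nu=\int\varphi\,d\mu$ for every continuous $\varphi$, so $\nu=\mu$. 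Hence $Y$ is uniquely ergodic.

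The argument is remarkably short; the crucial point is that everywhere-genericity lets one bypass the ergodic decomposition and invoke dominated convergence directly. The only step that exploits the acting group being $\mathbb Z$ is the orbit-invariance $\mu_{f(x)}=\mu_x$, which here reduces to the cancellation of a single boundary term. For a non-abelian group action, whether the Følner averages of $\varphi$ at $x$ and at a translate stay close depends on the geometry of the Følner sequence, and this is precisely the point at which the rigidity expressed in Theorem~\ref{thm:pointwise_uniq_erg} can break down, as the examples announced in the abstract will show.
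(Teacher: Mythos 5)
Your argument is correct, and it is worth noting that the paper itself does not prove Theorem~\ref{thm:pointwise_uniq_erg}: the statement is imported from the cited works, and what the paper proves instead is its generalization to amenable group actions, namely the implication from weak mean equicontinuity to unique ergodicity of orbit closures in Theorem~\ref{the:Weyl-weak-mean}. Your route is the classical short one for $\Z$: orbit-invariance $\mu_{f(x)}=\mu_x$ via cancellation of the boundary term, constancy of $x\mapsto\mu_x$ on orbit closures via continuity, and then dominated convergence applied to the Birkhoff averages integrated against an arbitrary invariant measure $\nu$ carried by the orbit closure. This is complete; the only points left implicit are that $\mu_x$ is itself an invariant measure supported on $\overline{\{f^n(x)\: n\in\Z\}}$, so that the orbit closure supports at least one invariant measure and your identity $\nu=\mu$ really does give unique ergodicity --- both standard. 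The paper's proof of the generalized statement is necessarily different in structure: for a general left F\o lner sequence in a non-abelian group the identity $\mu_{gx}=\mu_x$ fails (this failure is exactly what Section~\ref{sec: example} exploits), so the authors instead argue through the Wasserstein pseudometric $\mathcal W$, using Lemma~\ref{lem:Furstenberglemma} to make a point of the orbit generic for a prescribed ergodic measure along \emph{some} F\o lner sequence and Corollary~\ref{cor: almost all points are generic} to produce generic points in the support of a second measure, concluding $W(\mu,\nu)\leq 2\eps$. Your closing remark correctly identifies the boundary-term cancellation as the step that does not survive the passage to non-abelian groups, which is precisely the moral of the paper.
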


In fact, this strong rigidity observed for $\Z$-actions extends to actions by
(countable) abelian groups, see \cite{CaiKwietniakLiPourmand2022} and \cite{XuZheng2022}
(note that strictly speaking, \cite{CaiKwietniakLiPourmand2022} only considers
integer actions but---as the authors point out---their techniques straightforwardly
extend to the general abelian setting). 
The main contribution of the present work is to show that this is no longer the case
when we consider actions by non-abelian groups.
Indeed, we provide concrete counter-examples involving the group of all orientation
preserving homeomorphisms on the unit interval as well as the Lamplighter group
showing that from the above mentioned cases also (a) and (b) (without pointwise unique ergodicity) can occur.
 
Before formulating our main result, we introduce some terminology. 
To be able to average along orbits (and to be able to make sense of the notion of genericity),
we need the notion of a (left) Følner sequence $(F_n)_{n\in\N}$ in the acting group $G$.
The precise nature of $(F_n)$ depends on the properties of $G$, see Section~\ref{sec:Folner_seq}.
When $G$ is a countable discrete amenable group (such as the Lamplighter group), the sets $F_n$ are  finite sets and a point $x\in X$ is $\mu$-\emph{generic} along $(F_n)_{n\in\N}$
(with $\mu$ an invariant measure) if
\[
    \lim\limits_{n\to\infty}1/|F_n|\cdot\sum_{g\in F_n} \varphi(gx)=\int\limits_X\!\varphi \,d\mu
\]
for every continuous function $\varphi: X\to\R$. 
Clearly, this is equivalent to
\[
  \lim_{n\to\infty}1/|F_n|\cdot \sum_{g\in F_n} \delta_{gx}=\mu,
\]
where the limit is taken in the weak*-topology.

With this, we can state our main result, see Section~\ref{sec: example} for the details.
 
\begin{thm}\label{thm: example}
  There exists an effective transitive action of the Lamplighter group $G$ on a compact metric 
  space $X$ with two trivial ergodic measures $\delta_{\hat\infty}$ and $\delta_{\check\infty}$
  where for each of the following (mutually exclusive) alternatives, there 
  is a left F\o lner sequence $(F_n)_{n\in\N}$ in $G$ such that
  $x\mapsto \lim_{n\to\infty}1/|F_n|\cdot \sum_{g\in F_n} \delta_{gx}=\mu_x$ is
  well-defined and
  \begin{itemize}
    \item $x\mapsto \mu_x$ is not continuous or
    \item $x\mapsto \mu_x$ is continuous and $\mu_x$ is not ergodic for any $x\in X\setminus \{\hat \infty,\check\infty\}$ or
    \item $x\mapsto \mu_x$ is continuous and $\mu_x$ is ergodic for some but not all $x\in X\setminus \{\hat \infty,\check\infty\}$ or
    \item $x\mapsto \mu_x$ is continuous and $\mu_x$ is ergodic for every $x\in X$.
 \end{itemize}
  Moreover, if $(F_n)_{n\in\N}$ is a right F\o lner sequence, then F\o lner
  averages along $(F_n)_{n\in\N}$ are not continuous.
  Specifically,
  \[
    \lim_{n\to \infty} 1/|F_n|\cdot \sum_{g\in F_n} \delta_{gx}
    =1/2\cdot \delta_{\hat \infty}
    +1/2\cdot \delta_{\check \infty} \qquad (x\in X\setminus \{\hat \infty,\check \infty\}).
  \]
\end{thm}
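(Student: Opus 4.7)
The plan is to explicitly construct the space $X$ and the $G$-action, and then realise each prescribed behaviour via a suitable Følner sequence. Writing the Lamplighter group as $G = A \rtimes \Z$ with $A = \bigoplus_{\Z} \Z/2\Z$, I would construct $X$ as a compactification of a single dense $G$-orbit, to which the two fixed points $\hat\infty$ and $\check\infty$ are attached as asymptotic ``directions'' of the $\Z$-factor. Because $A$ only alters finitely many lamp coordinates while the $\Z$-part shifts them, both added points remain fixed under the whole of $G$. With a suitable choice of topology the action is continuous, effective, and topologically transitive, with $X\setminus\{\hat\infty,\check\infty\}$ the open dense orbit. Any Følner average $\mu_x$ is automatically $G$-invariant, and since the only $G$-invariant probability measures on $X$ are convex combinations of $\delta_{\hat\infty}$ and $\delta_{\check\infty}$, each $\mu_x$ takes the form $\alpha_x\delta_{\hat\infty}+(1-\alpha_x)\delta_{\check\infty}$. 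The whole task therefore reduces to controlling the coefficient function $\alpha_x$.

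For the four alternatives concerning left Følner sequences, the key structural asymmetry is that left multiplication by a pure lamp element $(a,0)\in G$ does not change the $\Z$-coordinate of a group element, whereas left multiplication by a shift $(0,m)$ translates it rigidly. This allows me to build left Følner sequences $(F_n)$ by tensoring a $\Z$-Følner sequence with lamp configurations in $A$ in such a way that the $\Z$-distribution of $F_n$, and its interaction with the base point, can be designed essentially at will. By calibrating where these $\Z$-coordinates accumulate (at $+\infty$, at $-\infty$, symmetrically, or in an $x$-dependent fashion), I obtain Følner sequences realising each of the four listed alternatives in turn. Verifying the left Følner condition for these products reduces to routine bookkeeping once the $\Z$- and $A$-parts are chosen with appropriately growing ``lamp windows''.

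The right Følner condition, by contrast, couples the $\Z$- and $A$-components: right multiplication by a lamp element sends $(f,k)$ to $(f+\sigma^k a,k)$, so the positions of the toggled lamps depend on the current $\Z$-coordinate. A first consequence is that for any right Følner sequence the assignment $x\mapsto\mu_x$ is constant on $G$-orbits, so $\alpha_x$ is forced to take a single value on the open orbit. I would then argue, by analysing the interplay between the $\Z$-distribution of a right Følner set and the lamp configurations it is forced to carry, that this common value must equal $\tfrac{1}{2}$, yielding the announced balanced limit. This last point is the main obstacle: proving that \emph{every} right Følner sequence (not merely a carefully chosen one) produces the balanced value requires a structural result about right Følner sets in the Lamplighter group, reflecting how non-commutativity rigidifies the right-invariant geometry in a way that the left-invariant geometry does not.
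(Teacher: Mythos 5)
Your overall reduction---identify $\mathcal M(X,G)$ with the segment between $\delta_{\hat\infty}$ and $\delta_{\check\infty}$ and control the coefficient $\alpha_x$---is the right frame, but the mechanism you propose for steering $\alpha_x$ does not work, and the problem traces back to the ambiguity in your construction of $X$. The space that works is the disjoint union of \emph{two} copies of the one-point compactification of $\mathbb{Z}$, with $f_n$ transposing the two points over $n$ and $\sigma$ shifting each copy; the two extra points are \emph{not} the two ends of the $\mathbb{Z}$-direction. (On a single copy of $\mathbb{Z}$ the transpositions $f_n$ would not commute, so the Lamplighter relations fail; and if instead you glue the two ends of the orbit $\mathbb{Z}\times\{0,1\}$ to one point per ``direction'', then for any Følner sequence the end-distribution of the $\mathbb{Z}$-marginal is asymptotically translation-invariant, so $x\mapsto\mu_x$ is constant on the dense orbit and can never be continuous at both fixed points---three of the four alternatives become unattainable.) In the correct topology $\hat n\to\hat\infty$ as $|n|\to\infty$ regardless of sign, so ``where the $\mathbb{Z}$-coordinates of $F_n$ accumulate'' is irrelevant; the quantity that determines $\mu_{\hat b}$ is the asymptotic proportion $r_b$ of elements $\sigma^a\circ f_{\mathbf b}\in F_n$ whose lamp configuration toggles site $b$, giving $\mu_{\hat b}=(1-r_b)\delta_{\hat\infty}+r_b\,\delta_{\check\infty}$. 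The substance of the argument is that the \emph{left} Følner condition puts no constraint on the sequence $(r_b)_{b\in\mathbb{Z}}\in[0,1]^{\mathbb{Z}}$: one builds $F_n$ as a product of an interval in $\mathbb{Z}$ with a family $B_n$ of lamp configurations realizing prescribed densities $r_b$ up to error $2^{-2n}$, and the four alternatives correspond to four choices of $(r_b)$. Note also that one fixed Følner sequence must serve all $x$ simultaneously, so the ``$x$-dependent'' calibration you invoke is not available.

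For the right Følner half you explicitly leave the decisive step open, and it is exactly the step the theorem turns on. The missing structural fact is elementary: for any finite $F_n\subseteq G$ and any $b\in\mathbb{Z}$,
\[
|F_n\circ f_b\setminus F_n|\;\geq\;\bigl|\,|\{\sigma^a\circ f_{\mathbf b}\in F_n:\ b\in\mathbf b\}|-|\{\sigma^a\circ f_{\mathbf b}\in F_n:\ b\notin\mathbf b\}|\,\bigr|,
\]
because right multiplication by $f_b$ toggles membership of $b$ in $\mathbf b$. Hence \emph{every} right Følner sequence forces the toggling proportion at every site to tend to $1/2$, which (together with the $\mathbb{Z}$-marginal escaping to infinity) yields $\mu_x=\tfrac12\delta_{\hat\infty}+\tfrac12\delta_{\check\infty}$ on the dense orbit. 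Alternatively, the orbit-constancy you record can be combined with the symmetry that $\mu^{(b,1)}$ accumulates at $\alpha\delta_{\hat\infty}+(1-\alpha)\delta_{\check\infty}$ exactly when $\mu^{(b,0)}$ accumulates at $(1-\alpha)\delta_{\hat\infty}+\alpha\delta_{\check\infty}$, and the identity $(b,0)=f_b(b,1)$, to force $\alpha=1/2$. Either way, the argument must cover an arbitrary right Følner sequence, not a chosen one, and your sketch does not supply it.
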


En passant, the above answers a question by Li, Ye and Yu \cite[Question 7]{LiYeYu2021}
in the negative: for general actions by locally compact $\sigma$-compact amenable
groups, the notions of Besicovitch-mean equicontinuity and Weyl-mean equicontinuity
do not coincide (for a background,
see \cite{LiYeYu2021, FuhrmannGroegerLenz2022} and references therein).
Further, our example also shows that the assumptions in
\cite[Theorem 1.7]{XuZheng2022} are necessary.

The last part of Theorem \ref{thm: example} has to be seen against the background of a general structural result, see Theorem~\ref{the:Weyl-weak-mean}:
if each $x\in X$ is $\mu_x$-generic along a
fixed \emph{right} F\o lner sequence and the map $x\mapsto\mu_x$
is continuous, then the conclusions of
Theorem~\ref{thm:pointwise_uniq_erg} still hold. 
In more general terms, this shows that already for the very basic ergodic properties 
considered in this article, there are fundamental differences between left and right F\o lner sequences.

Let us point out that we provide a second (and simpler) example
which shows that Theorem~\ref{thm:pointwise_uniq_erg} cannot
hold in general (that is, beyond actions of abelian groups), see Example~\ref{exmp: F-weak on interval}.
That example, however, does not allow for the case where
all $\mu_x$ are ergodic (case (b)) and furthermore, involves the less standard concept of thin F\o lner
sequences (introduced in \cite{SchneiderThom2018}), see Section~\ref{sec: thin Folner sequence} and the appendix for a background.

Finally, letting $\mc F$ be a fixed Følner sequence, the property that every 
point $x\in X$ is $\mu_x$-generic along $\mc F$ with $x\mapsto\mu_x$ continuous is intimately linked to what is referred to as
$\mc F$-weak mean equicontinuity, see Section~\ref{sec:weak_mean_equi}.
This notion was introduced by Zheng and Zheng for $\Z$-actions, see \cite{ZhengZheng2020}
and further \cite{CaiKwietniakLiPourmand2022,XuZheng2022}.
We generalize (parts of) \cite[Theorem 4.4]{CaiKwietniakLiPourmand2022}
as well as \cite[Theorems 3.5 and 4.3]{XuZheng2022} beyond the setting of countable 
discrete abelian/amenable groups, see Theorems \ref{thm: characterisations of weak mean equicontinuity} 
and \ref{the:Weyl-weak-mean}.
This allows us to discuss the novel examples from above in a unified framework.
We achieve this unification by associating to a general $\mc F$-weakly mean
equicontinuous action a natural bounded linear operator $S$ on $C(X)$ (the space of all continuous functions on $X$). 
This operator is a positive contractive projection 
(meaning that $S\varphi\geq 0$ if $\varphi\geq 0$, $S\boldsymbol{1}\leq 1$ and $S^2=S$)
and we obtain
\begin{thm}
The cases (a)--(c) from above translate to
\begin{enumerate}[(a)]
  \item $S(\varphi S\psi)=S(S\varphi S\psi)$ for all $\varphi,\psi\in C(X)$ (\emph{Seever's identity});
  \item $S(\varphi S\psi)=S\varphi S\psi$
    for all $\varphi,\psi\in C(X)$ ($S$ is an \emph{averaging operator}) iff $\mu_x$ is ergodic for all $x\in X$;
  \item $T_gS=S$ for all $g\in G$ iff each orbit closure is uniquely ergodic.
\end{enumerate}
\end{thm}
Here, $T_g \varphi=\varphi\circ g$ for each $g\in G$ and $\varphi\in C(X)$, see Theorem \ref{thm:rel_bounded_lin_op} for the details.
As an immediate consequence of this operator-theoretic perspective, we get that in general, the space of all invariant measures of a $\mc F$-weak mean 
equicontinuous action is a Bauer simplex, see Corollary \ref{cor3.16}.

\section{Preliminaries}\label{sec:preliminaries}

We start by briefly reviewing some basics from topological dynamics and ergodic theory.
For a thorough discussion of these topics from a functional analytical perspective (which we frequently assume in this note), see \cite{EisnerFarkasHaaseNagel2015}.

	 A \emph{topological dynamical system} (or simply a \emph{system}) is a triple $(X,G,\alpha)$ where $G$ is a topological group, $X$ is a compact metrizable space and $\alpha\colon G\times X\to X$ is a jointly continuous left action of $G$ on $X$.
	 We often just write $gx$ for $\alpha(g,x)$ and keep $\alpha$ implicit by simply referring to $(X,G)$ as a system.
	 For $x\in X$, we denote by $Gx=\{gx\: g\in G\}$ the \emph{orbit} of $x$.
	 	 
	 Given a system $(X,G)$, the action of $G$ on $X$ canonically defines the so-called \emph{Koopman representation} of $G$ on $C(X)$---the space of all continuous functions on $X$ equipped with the $\sup$-norm 
	 $\|\cdot\|_\infty$.
	 Specifically, given $f\in C(X)$ and $g\in G$, we write $g.f$ for the mapping $x\mapsto f(g x)$ in $C(X)$.
	 Through the dual of this representation, $G$ further acts on $\mathcal{M}(X)$---the collection of all Borel probability measures on $X$.
	 Specifically, given $\mu\in \mathcal{M}(X)$ and $g\in G$, we write $g^*\mu$ for the measure $\nu \in \mc M(X)$ with $\nu(f)=\mu(g.f)$ for $f\in C(X)$.
	 If $\mu\in \mc M(X)$ satisfies $g^*\mu=\mu$, we call $\mu$ \emph{invariant}.
	 The collection of all invariant measures of $(X,G)$ is denoted by $\mc M (X,G)$.
	 Recall that a measure $\mu \in \mc M(X,G)$ is \emph{ergodic} if each essentially invariant Borel set $A\ssq X$ satisfies $\mu(A)\in\{0,1\}$.
	 Here, $A$ is \emph{essentially invariant} if $\mu(A\triangle gA)=0$ for each $g\in G$ with $\triangle$ the symmetric difference.

\subsection{Wasserstein distance}	
We throughout consider $\mc M(X)$ and $\mc M(X,G)$ equipped with the 
weak*-topology.
Among the metrics which are compatible with this topology, we will make use of the \emph{Wasserstein distance}.

	Given $\mu,\nu\in \mathcal{M}(X)$, recall that $\iota\in \mathcal{M}(X^2)$ is a \emph{coupling} of $\mu$ with $\nu$,
	if the pushforwards of the projections $\pi_1$ and $\pi_2$ to the first and second coordinate, respectively, satisfy
	$\pi_1^*\iota=\mu$ and $\pi_2^*\iota=\nu$.
	The Wasserstein distance ${W}\colon \mathcal{M}(X)^2\to [0,\infty)$ is defined through 
	\[
	{W}(\mu,\nu)=\inf_\iota \iota(d),
	\]
	where the infimum is taken over all couplings $\iota$ of $\mu$ with $\nu$, and $d$ is some compatible metric on $X$.
	It is well known that
	${W}$ is a metric on $\mathcal{M}(X)$ which induces the weak*-topology, see \cite[Chapter~7]{Villani2003}.
	
	In fact, we will frequently utilize an alternative way of computing $W$ via
\[
{W}(\mu,\nu)=\sup_{f}\left|\int_Xfd\mu-\int_Xfd\nu\right|,
\]
where the supremum is taken over all Lipschitz functions $f$ on $X$ with $\|f\|_{Lip}\leq 1$ \cite[Remark~7.5]{Villani2003}. 
Here, $\|f\|_{Lip}$ denotes the infimum of all possible Lipschitz constants for $f$.

\subsection{F\o lner sequences}\label{sec:Folner_seq}
    In order to study statistical properties along the orbits of a system $(X,G)$, one often averages along F\o lner sequences.
    
    Besides the standard F\o lner sequences---which we may refer to as \emph{thick} F\o lner sequences in the following---we also deal with so-called \emph{thin} F\o lner sequences, which allow to deal with amenable groups which are not locally compact \cite{SchneiderThom2018}.
    Note that whenever we speak of F\o lner sequences (thin or thick), we actually refer to \emph{left} F\o lner sequences.
	At times, we will also deal with \emph{right} F\o lner sequences 
	but whenever we do, we explicitly mention the attribute \emph{right}.
	The definition of right F\o lner sequences is the same as for the left ones only that below---in \eqref{eq: defn thick Folner} and \eqref{eq: defn thin Folner}---one has to multiply from the right by $K$ and $g$, respectively.
	Further, in \eqref{eq: defn thick Folner}, the Haar measure $\theta$ has to be replaced by a right Haar measure $\theta_r$
	while in \eqref{eq: defn thin Folner}, the definition of $\mathfrak{m}_V$ has to be adjusted in the obvious way.

	\subsubsection{(Thick) F\o lner sequences}
	Let $G$ be a locally compact group with a (left) Haar measure $\theta$.
	A sequence $(F_n)_{n\in \mathbb{N}}$ of non-empty compact subsets in $G$ with positive Haar measure is called a 
  \emph{(thick) F\o lner sequence} if
	\begin{align}\label{eq: defn thick Folner}
	\lim_{n\to\infty} \frac{\theta(F_n\Delta KF_n)}{\theta(F_n)}= 0,
	\end{align}
  for all compact $K\subseteq G$.
	
	It is well-known that a locally compact group is $\sigma$-compact and amenable if and only if it contains a (thick) F\o lner sequence
	\cite[Theorem~3.2.1]{EmersonGreenleaf1967}.
	\begin{prop}\label{prop:thickFolner_combine}
	If $(\mathcal{F}^{(m)})_{m\in \mathbb{N}}$ is a countable family of thick F\o lner sequences in a locally compact topological group $G$, then there exists a thick F\o lner sequence $\mathcal{F}$ such that $\mathcal{F}$ and  $\mathcal{F}^{(m)}$ have a common subsequence for any $m\in \mathbb{N}$. 
\end{prop}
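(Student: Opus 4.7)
The plan is a standard diagonal interleaving construction. Since $G$ carries a thick F\o lner sequence, it is $\sigma$-compact and amenable by the Emerson--Greenleaf characterisation cited just above. Fix, therefore, a compact exhaustion $G=\bigcup_{k\in\mathbb{N}}K_k$ with $K_k=K_k^{-1}$, $e\in K_k$, and $K_k\subseteq K_{k+1}$; every compact subset of $G$ then lies inside some $K_k$. I also choose a surjection $m\colon \mathbb{N}\to\mathbb{N}$ each of whose fibres is infinite (e.g.\ by projecting a bijection $\mathbb{N}\to\mathbb{N}^2$ onto its first coordinate).

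I would then build $\mathcal{F}=(F_\ell)_{\ell\in\mathbb{N}}$ inductively. Having already selected $F_j=F^{(m(j))}_{n_j}$ for $j<\ell$, set $M_\ell:=\max\{n_j:j<\ell,\ m(j)=m(\ell)\}$ (and $M_\ell:=0$ if this set is empty). Using that $\mathcal{F}^{(m(\ell))}$ is a thick F\o lner sequence, pick $n_\ell>M_\ell$ large enough that
\[
\frac{\theta\bigl(F^{(m(\ell))}_{n_\ell}\,\Delta\,K_kF^{(m(\ell))}_{n_\ell}\bigr)}{\theta\bigl(F^{(m(\ell))}_{n_\ell}\bigr)}<\frac{1}{\ell}\qquad(1\le k\le \ell),
\]
and set $F_\ell:=F^{(m(\ell))}_{n_\ell}$.

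The remaining task is to verify the two required properties. For the common-subsequence property, for each $\bar m\in\mathbb{N}$ the index set $\{\ell:m(\ell)=\bar m\}$ is infinite and the associated $n_\ell$ are strictly increasing by our choice of $M_\ell$, so the corresponding subsequence of $\mathcal{F}$ coincides with a genuine subsequence of $\mathcal{F}^{(\bar m)}$. For the F\o lner property, given a compact $K\subseteq G$ choose $k_0$ with $K\subseteq K_{k_0}$. Then $KF_\ell\subseteq K_{k_0}F_\ell$ together with $F_\ell\subseteq K_{k_0}F_\ell$ (as $e\in K_{k_0}$) yields $KF_\ell\setminus F_\ell\subseteq K_{k_0}F_\ell\,\Delta\, F_\ell$; and for the opposite defect, fixing any $g\in K$, left-invariance of $\theta$ combined with symmetry of $K_{k_0}$ gives
\[
\theta(F_\ell\setminus KF_\ell)\le\theta(F_\ell\setminus gF_\ell)=\theta(g^{-1}F_\ell\setminus F_\ell)\le\theta(K_{k_0}F_\ell\,\Delta\, F_\ell),
\]
so both defects vanish after dividing by $\theta(F_\ell)$ and letting $\ell\to\infty$.

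The main obstacle is not conceptual but organisational: I must simultaneously (i) visit each $\mathcal{F}^{(m)}$ infinitely often, (ii) take strictly increasing indices within each $\mathcal{F}^{(m)}$, and (iii) sharpen the F\o lner defect along the exhaustion $(K_k)$. The only technical subtlety is that the F\o lner condition is demanded for \emph{every} compact $K$ and not just for the exhausting $K_k$; this I handle by insisting the $K_k$ are symmetric and contain the identity, as arranged above.
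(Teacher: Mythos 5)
Your proposal is correct and follows essentially the same route as the paper's proof: a compact exhaustion $(K_k)$, an index sequence visiting each $m$ infinitely often, and a choice of $F_\ell$ from $\mathcal{F}^{(m(\ell))}$ with F\o lner defect at most $1/\ell$ relative to $K_1,\dots,K_\ell$. The extra details you supply (strictly increasing indices within each $\mathcal{F}^{(m)}$, and the symmetry of the $K_k$ to control $\theta(F_\ell\setminus KF_\ell)$ for arbitrary compact $K$) are exactly the points the paper leaves implicit, and your handling of them is sound.
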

\begin{proof}
	As $G$ allows for a (thick) F\o lner sequence, it is $\sigma$-compact. 
	That is, there is a sequence $(K_n)_{n\in \mathbb{N}}$ of compact sets with $\bigcup_{n\in \mathbb{N}}K_n=G$ and $K_n\subseteq K_{n+1}$.  
	Let $(m_n)_{n\in \mathbb{N}}$ be a sequence in $\mathbb{N}$ such that for each $m\in \mathbb{N}$, we have $m_n=m$ infinitely often.
	We pick $F_n$ from $\mathcal{F}^{(m_n)}$ with
	$\theta(K_n F_n \Delta F_n)/\theta(F_n)\leq 1/n$.	
\end{proof}
	
\subsubsection{Thin F\o lner sequences}\label{sec: thin Folner sequence}

	Let $G$ be a topological group. 
 	Consider finite subsets $F,E\ssq G$ and a neighbourhood $V$ of the neutral element $e_G \in G$. 
		The \emph{$V$-matching number} $\mathfrak{m}_V(F,E)$ of $F$ and $E$ is the
    maximal cardinality of a subset $M$ of $F$ with an injection $\phi\colon M\to E$ such that for every $f\in M$, we have $\phi(f)\in Vf$.
		
	The following concept was introduced in \cite{SchneiderThom2018}. 
	A sequence $(F_n)_{n\in \mathbb{N}}$ of finite non-empty subsets of $G$ is said to be a \emph{thin F\o lner sequence} in $G$ if for every $g\in G$ and every open neighbourhood $V$ of $e_G$, we have 
	 \begin{align}\label{eq: defn thin Folner}
	 \lim_{n\to \infty}\frac{\mathfrak{m}_V(F_n,gF_n)}{|F_n|}=1.
	 \end{align}
	 \begin{rem}\label{rem:usualFolner}
	If $G$ is a countable discrete group, then the concepts of thin and thick
	F\o lner sequences coincide.
	\end{rem}
	
	\begin{rem}\label{rem: separable}
	Note that a topological group that allows for a thin F\o lner sequence is separable. Indeed, if $(F_n)_{n\in \mathbb{N}}$ is such a sequence, the countable set $\bigcup_{n\in \mathbb{N}} F_nF_n^{-1}$ is dense. 
\end{rem}
\begin{rem}
 As much as it is straightforward, it is important to observe that given a thin F\o lner sequence $(F_n)$ and some sequence $(g_n)$ in $G$, we have ${\mathfrak{m}_V(F_n,gF_n)}={\mathfrak{m}_V(F_ng_n,gF_ng_n)}$
 for each $g\in G$ and each neighbourhood $V$ of the neutral element.
 In particular, with $(F_n)$ also $(F_n g_n)$ is F\o lner.
\end{rem}

	 A second countable topological group $G$ is amenable if and only if it allows for a thin F\o lner sequence. 
	Indeed, in \cite[Remark 4.6]{SchneiderThom2018}, it is shown that whenever there is a thin F\o lner sequence, then $G$ is amenable---see also Corollary~\ref{cor: Krylov-Bogolyubov} below.
	To observe the converse, let $\{g_n\in G\: n\in \mathbb{N}\}$ be a countable dense subset of $G$ and let $V_n$ be the open ball of radius $1/n$ centred at the neutral element $e_G$. 
	With \cite[Theorem 4.5]{SchneiderThom2018}, we get that for each $n\in \mathbb{N}$, there exists a finite set $F_n\subseteq G$ such that 
	$\mathfrak{m}_{V_n}(F_n,g_kF_n)/|F_n|\geq 1-1/n$ for $k=1,\dots, n$. 
	It is straightforward to see that $(F_n)_{n\in \mathbb{N}}$ is a thin F\o lner sequence in $G$. 

	\begin{example}\label{exmp: hom+ on interval}
	 Examples of amenable not locally compact groups are given by \emph{extremely amenable groups} \cite{GiordanoPestov2002}.
	 These include the orientation-preserving homeomorphisms $\textrm{Hom}_+(\mathbb I)$ (equipped with the topology of uniform convergence) on the closed unit interval $\I$  \cite{Pestov1998}.
    By the above, there is hence a thin F\o lner sequence $(F_n)$ in $\textrm{Hom}_+(\mathbb I)$.
	\end{example}

Similarly to Proposition~\ref{prop:thickFolner_combine}, we have	
\begin{prop}\label{prop:thinFolner_combine}
	If $(\mathcal{F}^{(m)})_{m\in \mathbb{N}}$ is a countable family of thin F\o lner sequences in a second countable topological group $G$, then there exists a thin F\o lner sequence $\mathcal{F}$ such that $\mathcal{F}$ and  $\mathcal{F}^{(m)}$ have a common subsequence for any $m\in \mathbb{N}$. 
\end{prop}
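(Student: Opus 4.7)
The plan is to adapt the proof of Proposition~\ref{prop:thickFolner_combine}, replacing $\sigma$-compactness with second countability of $G$ and the Haar-measure F\o lner condition with the matching-number condition.

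First, since $G$ is second countable (see Remark~\ref{rem: separable} for the density of $\bigcup_n F_nF_n^{-1}$, but here we only use second countability), I fix a countable dense subset $\{g_k : k\in\mathbb{N}\}\subseteq G$ and a decreasing countable neighbourhood base $(U_n)_{n\in\mathbb{N}}$ of $e_G$, together with a sequence $(m_n)_{n\in\mathbb{N}}$ in $\mathbb{N}$ in which every value appears infinitely often. Writing $\mathcal{F}^{(m)}=(F^{(m)}_k)_{k\in\mathbb{N}}$, I inductively pick an index $k_n$ strictly greater than every previously used index from $\mathcal{F}^{(m_n)}$ such that $F_n:=F^{(m_n)}_{k_n}$ satisfies
\[
\mathfrak{m}_{U_n}(F_n,g_i F_n)/|F_n| \geq 1-1/n \qquad (i=1,\dots,n).
\]
Such a choice is possible because $\mathcal{F}^{(m_n)}$ is thin F\o lner and only finitely many conditions are imposed at each stage. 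Setting $\mathcal{F}:=(F_n)_{n\in\mathbb{N}}$, the subfamily indexed by $\{n : m_n=m\}$ is simultaneously a subsequence of $\mathcal{F}$ and of $\mathcal{F}^{(m)}$ (here the strict increase of the $k_n$ is crucial), which settles the common-subsequence assertion.

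The remaining task is to verify that $\mathcal{F}$ is itself a thin F\o lner sequence. Given $g\in G$ and an open neighbourhood $V$ of $e_G$, I choose a symmetric open neighbourhood $W$ of $e_G$ with $WW\subseteq V$, then an index $n_0$ with $U_{n_0}\subseteq W$, and by density a $g_k$ with $w:=g_k g^{-1}\in W$. For any $n\geq \max\{n_0,k\}$, I take a matching $\phi\colon M\to g_k F_n = wgF_n$ realising $\mathfrak{m}_{U_n}(F_n,g_k F_n)$ and set $\phi'(f):=w^{-1}\phi(f)$; this lands in $gF_n$, remains injective, and satisfies $\phi'(f)\in w^{-1}U_n f\subseteq WWf\subseteq Vf$. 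Hence $\mathfrak{m}_V(F_n,gF_n)/|F_n|\geq 1-1/n$ eventually, as required.

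The main obstacle is bridging the matchings that the construction controls only at the countably many dense points $g_k$ and the strong neighbourhoods $U_n$ to the required condition for arbitrary $g\in G$ and arbitrary $V$. This is precisely what the translation trick via left multiplication by $w^{-1}$ achieves, at the cost of inflating the tolerance neighbourhood from $U_n$ to $WW\subseteq V$; apart from this manoeuvre, the argument parallels the thick case verbatim.
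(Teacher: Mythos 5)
Your construction is exactly the paper's: pick a countable dense set $\{g_k\}$, a shrinking neighbourhood base, a sequence $(m_n)$ hitting each value infinitely often, and choose $F_n$ from $\mathcal{F}^{(m_n)}$ with $\mathfrak{m}_{U_n}(F_n,g_kF_n)/|F_n|\geq 1-1/n$ for $k\leq n$. The paper stops there, declaring the F\o lner verification straightforward; your translation argument (replacing $g$ by a nearby $g_k$ and conjugating the matching by $w^{-1}$ with $WW\subseteq V$) correctly supplies the detail it omits.
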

\begin{rem}
Note that a topological group is second countable if and only if it is metrizable and separable. 
First, due to the Birkhoff-Kakutani Theorem, a topological group is metrizable if and only if it is first countable \cite[Section~1.22]{MontgomeryZippin2018}.
	Second, a metric space is separable if and only if it is second countable. 
\end{rem}
\begin{proof}[Proof of Proposition~\ref{prop:thinFolner_combine}]
	Let $\{g_n\in G\: n\in \mathbb{N}\}$ be dense in $G$ and let $(m_n)_{n\in \mathbb{N}}$ be a sequence in $\mathbb{N}$ such that for each $m\in \mathbb{N}$, we have $m_n=m$ infinitely often. 
	Denote by $V_n$ the open ball of radius $1/n$ centred at $e_G$. 
	We pick $F_n$ from $\mathcal{F}^{(m_n)}$ with
	$\mathfrak{m}_{V_n}(F_n,g_kF_n)/|F_n|\geq 1- 1/n$ for all $k=1,\dots,n$.
\end{proof}

\paragraph*{\textbf{Standing assumptions}}
    Throughout this work and without further mentioning, $G$ is assumed to be a
	$\sigma$-compact locally compact amenable group or a second countable amenable group.
    When speaking of a F\o lner sequence in $G$ (without mentioning thin or thick), we refer to whatever concept of F\o lner sequence (thin or thick) is available in $G$.
    In fact, for the sake of a concise presentation, we formulate most statements without specifying the kind of the involved F\o lner sequences and readers may interpret each such statement as two statements---one in which all the occurring F\o lner sequences are thin and another one in which all are thick.
 In a similar vein, given a (thin or thick) F\o lner sequence $\mc F =(F_n)_{n\in \N}$ in $G$ and some map $f$ from $G$ into $\R$, we may write the average of $f$ over $F_n$ as
 $1/\theta(F_n)\cdot \int_{F_n}\!f(g) \,d\theta(g)$ where $\theta$ denotes the counting measure in case $(F_n)$ is a thin F\o lner sequence while it denotes a Haar measure if $(F_n)$ is thick.
	
	A background on ergodic theory with (thin) F\o lner sequences
	is given in Appendix~\ref{appendix: thin folner}.

\section{Weak mean equicontinuity}\label{sec:weak_mean_equi}
	Given a dynamical system $(X,G)$ and a F\o lner sequence $\mathcal{F}=(F_n)_{n\in \mathbb{N}}$ in $G$, for each $n\in \N$, we define a positive bounded operator $S_n=S_n^\mathcal{F}\: C(X)\to C(X)$ through
	\[
	 S_n\:  f\mapsto\Big(x\mapsto 1/\theta(F_n)\int_{F_n}\!f(gx)\,d\theta(g)\Big).
	\]
	Following \cite{XuZheng2022}, based on the dual $S_n^*$ and the Wasserstein distance $W$, we introduce the pseudometrics
\begin{align*}
\mathcal{W}_\mathcal{F}(x,y)= \limsup_{n\to \infty}{W}(S_n^*\delta_x,S_n^*\delta_y)
\end{align*}
	and $\mathcal{W}(x,y)=\sup_{\mathcal{F}}\mathcal{W}_\mathcal{F}(x,y)$, where the supremum is taken over all F\o lner sequences. 
While most of the time, we are interested in left F\o lner sequences in this work, we may also consider the above objects (in particular, $S_n$ and $\mc W_{\mc F}$) to be defined for right F\o lner sequences.
Formally, the respective definitions remain the same after replacing the (left) Haar measure $\theta$ by a right Haar measure $\theta_r$.

\begin{rem}\label{rem: averaged Wasserstein is bigger than averaged narrow}
As in \cite[Appendix]{XuZheng2022}, one can see that if $\mc F$ is a thin F\o lner sequence in $G$, then
\begin{align}\label{eq: original defn of Wf}
\mathcal{W}_\mathcal{F}(x,y)= \limsup_{n\to \infty}\inf_{\sigma}\frac{1}{|F_n|}\sum_{g\in F_n}d(gx,\sigma(g)y),
\end{align}
where the infimum is taken over all permutations $\sigma$ of $F_n$.
Indeed, for $G=\Z$ and $\mathcal{F}$ the standard F\o lner sequence in $\Z$, \eqref{eq: original defn of Wf}
was the original definition for the pseudometric $\mathcal{W}_\mathcal{F}$ as
introduced in \cite{ZhengZheng2020} (motivated by the idea that statistical properties of the long time behaviour should not depend on the dynamical order on orbits).
Shortly after, the above equation was established in \cite{CaiKwietniakLiPourmand2022}
for countable abelian groups and in \cite{XuZheng2022} for countable discrete 
amenable groups.
\end{rem}

In the present work, we study the relationship between the continuity of the above pseudometrics and the continuity of F\o lner averages.

\begin{definition}
 We say $(X,G)$ is \emph{$\mc F$-weakly mean equicontinuous} if
$\mathcal{W}_\mathcal{F}\in C(X^2)$.
We call $(X,G)$ \emph{weakly mean equicontinuous} if $\mathcal{W}\in C(X^2)$.
\end{definition}
The next statement shows that this terminology is in line with \cite{XuZheng2022}.

\begin{prop}\label{pro:WeylweakmeanequicontinuityviaallFolner}
	A system $(X,G)$ is weakly mean equicontinuous if and only if $\mathcal{W}_\mathcal{F}\in C(X^2)$ for every F\o lner sequence $\mathcal{F}$. 
\end{prop}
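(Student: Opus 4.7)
The plan is to reduce continuity of each pseudometric involved to continuity along the diagonal. For any pseudometric $\rho$ on $X$, the triangle inequality
\[
|\rho(x_n,y_n)-\rho(x,y)|\leq \rho(x_n,x)+\rho(y_n,y)
\]
shows that $\rho\in C(X^2)$ is equivalent to $\rho(x_n,x)\to 0$ whenever $x_n\to x$. With this reduction, the ``only if'' direction is immediate: since $\mathcal{W}_\mathcal{F}\leq \mathcal{W}$ pointwise by definition of the supremum, continuity of $\mathcal{W}$ forces $\mathcal{W}_\mathcal{F}(x_n,x)\to 0$ whenever $x_n\to x$, so $\mathcal{W}_\mathcal{F}\in C(X^2)$.

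For the ``if'' direction I will argue by contradiction. Assume some $x_n\to x$ with $\mathcal{W}(x_n,x)>\epsilon$ for all $n$ and some $\epsilon>0$. By the definition of $\mathcal{W}$ as a supremum over Følner sequences, for each $n$ there is a Følner sequence $\mathcal{F}^{(n)}$ with $\mathcal{W}_{\mathcal{F}^{(n)}}(x_n,x)>\epsilon$. Since $\mathcal{W}_{\mathcal{F}^{(n)}}$ is a $\limsup$, I may pass to a subsequence of $\mathcal{F}^{(n)}$ along which $W(S_F^{\ast}\delta_{x_n},S_F^{\ast}\delta_x)>\epsilon$ for every set $F$ in the subsequence; crucially, any subsequence of a (thin or thick) Følner sequence remains a Følner sequence, as is clear from the defining conditions \eqref{eq: defn thick Folner} and \eqref{eq: defn thin Folner}.

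The key step is then to invoke Proposition~\ref{prop:thickFolner_combine} (respectively Proposition~\ref{prop:thinFolner_combine} in the thin setting) applied to the countable family $(\mathcal{F}^{(n)})_{n\in\mathbb{N}}$ to obtain a single Følner sequence $\mathcal{F}$ sharing a common subsequence $\mathcal{G}^{(n)}$ with each $\mathcal{F}^{(n)}$. Since $\mathcal{G}^{(n)}$ is a subsequence of $\mathcal{F}^{(n)}$, every $F\in\mathcal{G}^{(n)}$ still satisfies $W(S_F^{\ast}\delta_{x_n},S_F^{\ast}\delta_x)>\epsilon$; since $\mathcal{G}^{(n)}$ is also a subsequence of $\mathcal{F}$, this yields $\mathcal{W}_\mathcal{F}(x_n,x)\geq \epsilon$ for every $n$. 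This contradicts the hypothesis that $\mathcal{W}_\mathcal{F}\in C(X^2)$, which would imply $\mathcal{W}_\mathcal{F}(x_n,x)\to\mathcal{W}_\mathcal{F}(x,x)=0$.

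The main obstacle---and the reason the proof is not a tautology---is that the witnesses $\mathcal{F}^{(n)}$ depend on $n$, so the suprema defining $\mathcal{W}(x_n,x)$ might in principle be attained by different Følner sequences for different $n$, leaving no single $\mathcal{F}$ that detects the discontinuity. The combining propositions~\ref{prop:thickFolner_combine} and \ref{prop:thinFolner_combine} are precisely what absorb all countably many obstructions into a single Følner sequence; once that diagonal argument is available, the rest is essentially the observation that a $\limsup$ dominates the $\limsup$ along any subsequence.
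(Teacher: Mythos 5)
Your proof is correct and follows essentially the same route as the paper: the easy direction via $\mathcal{W}_\mathcal{F}\leq\mathcal{W}$, and the converse by extracting from each $\mathcal{F}^{(n)}$ a subsequence witnessing the $\limsup$ and then absorbing all of these into a single F\o lner sequence via Propositions~\ref{prop:thickFolner_combine} and \ref{prop:thinFolner_combine}. The only (cosmetic) difference is that you phrase the converse as a proof by contradiction with a fixed $\epsilon$, whereas the paper argues directly with witnesses within $1/n$ of the supremum.
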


\begin{proof}
	Since $\mathcal{W}(x,y)\geq \mathcal{W}_\mathcal{F}(x,y)$, one implication is obvious. 
	For the converse, suppose $\mathcal{W}_\mathcal{F}$ is continuous for all F\o lner sequences $\mathcal{F}$. 
	Let $(x_n)_{n\in \mathbb{N}}$ be a sequence in $X$ with $x_n\to x\in X$. 
	Since $\mathcal{W}$ is a pseudometric, it suffices to show that $\mathcal{W}(x_n,x)\to 0$. 
	For $n\in\mathbb{N}$, there exists a F\o lner sequence $\mathcal{F}^{(n)}$ such that $\mathcal{W}_{\mathcal{F}^{(n)}}(x_n,x)+1/n\geq \mathcal{W}(x_n,x)$ and such that $\mathcal{W}_{\mathcal{F}^{(n)}}(x_n,x)$ is a limit. 
	By Propositions~\ref{prop:thickFolner_combine} and \ref{prop:thinFolner_combine} (in the thick and thin case, respectively), there exists a F\o lner sequence $\mathcal{F}$ such that $\mathcal{F}$ and $\mathcal{F}^{(n)}$ have a common subsequence $\tilde {\mathcal{F}}^{(n)}$ for all $n\in \mathbb{N}$.
	Thus 
	$\mathcal{W}(x_n,x)
	\leq \mathcal{W}_{\mathcal{F}^{(n)}}(x_n,x)+1/n
	= \mathcal{W}_{\tilde{\mathcal{F}}^{(n)}}(x_n,x)+1/n
	\leq \mathcal{W}_{\mathcal{F}}(x_n,x)+1/n
	\to 0$.  	
\end{proof}

\begin{thm}\label{thm: characterisations of weak mean equicontinuity}
	Let $(X,G)$ be a topological dynamical system.
	Suppose there is a left or right F\o lner sequence $\mc F$ such that
\begin{itemize}
	\item[(i)] 
	$(X,G)$ is $\mc F$-weakly mean equicontinuous.
\end{itemize}	
Then there is a subsequence $\mc F'$ of $\mc F$ such that
\begin{itemize}
    \item[(ii)] Every $x\in X$ is $\mu_x$-generic along $\mathcal{F}'$ for some measure $\mu_x\in \mathcal{M}(X)$ and the map $X\ni x\mapsto \mu_x\in \mathcal{M}(X)$ is continuous. 
    \item[(iii)] There is a bounded linear operator $S\: C(X) \to C(X)$ such that $S_n^{\mc F'}f(x)$ converges to $Sf(x)$ for each $f\in C(X)$ and $x\in X$.
\end{itemize}
	Moreover, (ii) holds (for some $\mc F'$) if and only if (iii) holds
	(for the same sequence $\mc F'$) and further, (ii) and (iii) imply (i) with $\mc F=\mc F'$.
\end{thm}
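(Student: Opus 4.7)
The plan is to establish three implications: (A) the equivalence (ii)$\Leftrightarrow$(iii) for the same subsequence $\mc F'$, (B) that (ii) and (iii) together imply (i) with $\mc F=\mc F'$, and (C) the existence, given (i), of a subsequence $\mc F'$ of $\mc F$ satisfying (ii). Parts (A) and (B) are essentially bookkeeping. For (ii)$\Rightarrow$(iii), I define $Sf(x) := \int f\,d\mu_x$; linearity and boundedness are immediate, the pointwise convergence $S_n^{\mc F'}f(x)=\int f\,dS_n^{\mc F'*}\delta_x \to Sf(x)$ is $\mu_x$-genericity tested against $f$, and $Sf\in C(X)$ follows from the joint continuity of the evaluation $\mc M(X)\times C(X)\to \R$. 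For (iii)$\Rightarrow$(ii), pointwise convergence $S_n^{\mc F'}f(x)\to Sf(x)$ for all $f\in C(X)$ forces weak*-convergence of $(S_n^{\mc F'*}\delta_x)_n$; the limit functional $f\mapsto Sf(x)$ is positive with $S\boldsymbol 1(x)=1$ (since $S_n\boldsymbol 1=\boldsymbol 1$), hence corresponds via Riesz to a probability measure $\mu_x \in \mc M(X)$, and continuity of $x\mapsto\mu_x$ is equivalent to $Sf\in C(X)$ for every $f$. For (B), weak*-convergence on $\mc M(X)$ is metrized by $W$, so $\mathcal{W}_{\mc F'}(x,y) = \lim_n W(S_n^{\mc F'*}\delta_x, S_n^{\mc F'*}\delta_y) = W(\mu_x,\mu_y)$, which is continuous in $(x,y)$ by continuity of $x\mapsto \mu_x$.

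The substantive step is (C). Since $X$ is compact metrizable, fix a countable dense subset $\{x_k\}_{k\in\N}\subseteq X$. Compactness of $\mc M(X)$ combined with a standard diagonal extraction yields a subsequence $\mc F'=(F_{n_j})_j$ of $\mc F$ (still a F\o lner sequence) along which $\mu_{x_k} := \lim_j S_{n_j}^{*}\delta_{x_k}$ exists for every $k$. To extend convergence to every $x\in X$, fix $\varepsilon > 0$ and use continuity of $\mathcal{W}_{\mc F}$ at $(x,x)$, where it vanishes, to pick $x_k$ with $\mathcal{W}_{\mc F}(x,x_k) < \varepsilon$. Since the $\limsup$ along a subsequence is bounded by the $\limsup$ along the full sequence, $\mathcal{W}_{\mc F'}(x,x_k)\leq \mathcal{W}_{\mc F}(x,x_k) < \varepsilon$, whence $W(S_{n_j}^{*}\delta_x, S_{n_j}^{*}\delta_{x_k}) < 2\varepsilon$ for all large $j$. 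Combining this with the Cauchy property of $(S_{n_j}^{*}\delta_{x_k})_j$ via the triangle inequality shows $(S_{n_j}^{*}\delta_x)_j$ is Cauchy in the complete metric space $(\mc M(X), W)$, hence converges to some $\mu_x$. Continuity of $x\mapsto \mu_x$ then follows from $W(\mu_x,\mu_y) = \mathcal{W}_{\mc F'}(x,y) \leq \mathcal{W}_{\mc F}(x,y)$ together with continuity of $\mathcal{W}_{\mc F}$.

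The main subtlety I expect is precisely this transfer of control: we are not given continuity of $\mathcal{W}_{\mc F'}$ and cannot expect it a priori, but the trivial domination $\mathcal{W}_{\mc F'} \leq \mathcal{W}_{\mc F}$ turns out to be exactly the form of one-sided estimate needed to propagate the Cauchy argument from the countable dense set to arbitrary points. The entire argument is insensitive to whether $\mc F$ is a left or right F\o lner sequence, provided $S_n$ and the Haar measure $\theta$ are interpreted consistently with the chosen side of translation.
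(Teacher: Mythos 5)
Your proof is correct and follows essentially the same route as the paper's: a diagonal extraction over a countable dense subset, followed by propagation to arbitrary points via the domination $\mathcal{W}_{\mc F'}\leq\mathcal{W}_{\mc F}$ together with the fact that the continuous pseudometric $\mathcal{W}_{\mc F}$ vanishes on the diagonal, and the same bookkeeping for (ii)$\Leftrightarrow$(iii) and (ii)$\Rightarrow$(i). The only (minor) difference is that you run the Cauchy argument directly in the complete metric space $(\mc M(X),W)$, whereas the paper runs it for each $1$-Lipschitz test function and then extends to all of $C(X)$ by density of the span of such functions; both implementations are sound.
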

\begin{rem}
 Recall that $x\in X$ is \emph{$\mu$-generic along $\Fol$} if for each $f\in C(X)$, the sequence $S_n(f)(x)$ converges to $\mu(f)$.
 Clearly, if $x$ is $\mu$-generic along some F\o lner sequence, then $\supp(\mu)\ssq \overline {Gx}$.
\end{rem}

\begin{proof}
    For (i) $\Rightarrow$ (ii), given $n\in \N$, let $X_n\ssq X$  be some finite set which is $1/n$-dense in $X$.
    Due to compactness of $\mc M(X)$, 
    we can recursively define for each $n\in \N$ a subsequence $\mc F^{(n)}$ of $\mc F^{(n-1)}$ (starting with $\mc F^{(0)}=\mc F$) such that
    each $x\in X_n$ is $\mu_x$-generic along $\mc F^{(n)}$ for some $\mu_x\in \mc M(X)$.
    A diagonal argument then gives a subsequence $\mc F'$ of $\mc F$ such that each $x$ in the dense set $\tilde X=\bigcup_{n\in \N} X_n$ is $\mu_x$-generic along $\mc F'$ for some $\mu_x\in \mc M(X)$.
    
    Now, as $\mc W_{\mc F'}$ is continuous (since we assume $\mc W_{\mc F}$ to be continuous), we have
	\begin{align}\label{eq: Sn on Lipschitz functions}\limsup_{n\to \infty}|S_n^{\mc F'}f(x)-S_n^{\mc F'}f(y)|\leq\limsup_{n\to \infty}W({S_n^{\mc F'}}^*\delta_x,{S_n^{\mc F'}}^*\delta_y)=\mathcal{W}_\mathcal{F'}(x,y)
	\end{align}
	for each Lipschitz function $f$ with $\|f\|_{Lip}\leq 1$ and all $x,y\in X$.
	
	As a consequence, given such Lipschitz function $f$, some $y\in X$ and $\eps>0$, we can pick 
	$x\in \tilde X$ with $\mathcal{W}_{\mc F'}(x,y)<\eps$ to obtain that for all $n,m>N$ (with $N$ such that 
	for $n,m>N$ we have
	$|S_n^{\mc F'}f(x)-S_m^{\mc F'}f(x)|<\eps$
	and $|S_n^{\mc F'}f(x)-S_n^{\mc F'}f(y)|\leq\mathcal{W}_{\mathcal F'}(x,y)+\eps$)
	\begin{align*}
	 &|S_n^{\mc F'}f(y)-S_m^{\mc F'}f(y)|\\
	 &\leq 
	 |S_n^{\mc F'}f(y)-S_n^{\mc F'}f(x)|+
	 |S_n^{\mc F'}f(x)-S_m^{\mc F'}f(x)|+
	 |S_m^{\mc F'}f(x)-S_m^{\mc F'}f(y)|\leq 5\eps.
	\end{align*}
    In other words, $S_n^{\mc F'}f(y)$
    converges for all $y\in X$.
    It follows that for any function $g$ in the span of the Lipschitz functions $f$ with $\|f\|_{Lip}\leq 1$,    
    $S_n^{\mc F'}g(y)$ converges for each $y\in X$.
    The collection of all such $g$ is dense in $C(X)$ (see \cite[Chapter 12]{Carothers2000}) and it is not hard to see
    that hence, $S_n^{\mc F'}f(x)$ converges for each $f\in C(X)$ and $x\in X$.
    We write $\mu_x(f)=\lim_{n\to \infty}S_n^{\mc F'}f(x)$.
	Clearly, $\mu_x\: C(X)\ni f\mapsto \mu_x(f)$ lies in $\mc M(X)$.
	Further, the map $x\mapsto \mu_x$ is continuous since $(X,G)$ is $\mc F'$-weakly mean equicontinuous.

	For (ii) $\Rightarrow$ (i), let $\mathcal{F}'$ be a F\o lner sequence such that (ii) holds. 
	Then, for $x,y\in X$, we have 
	$S_n^*\delta_x\rightarrow \mu_x$ and 
	$S_n^*\delta_y\rightarrow \mu_y$. 
	Thus, $\mathcal{W}_\mathcal{F'}(x,y)=\limsup_{n\to \infty}W(S_n^*\delta_x,S_n^*\delta_y)=W(\mu_x,\mu_y)$, which is continuous in $(x,y)\in X^2$. 	

	We next show that (ii) (for a given F\o lner sequence $\mc F'$) implies (iii) (with the same F\o lner sequence).
    To that end, set $S\: C(X)\to C(X), \, f\mapsto \mu_{(\cdot)}(f)$ with $\mu_{(\cdot)}$ given by (ii).
    Due to (ii), $S$ is well defined.
    It is straightforward to see that $S$ is bounded and linear.
    Moreover, due to (ii), for every $f\in C(X)$,
    $S_n^{\mc F'}f$ converges pointwise to $Sf$.
	
	Finally, for (iii) $\Rightarrow$ (ii), we define $\mu_x=S^*\delta_x$ for $x\in X$. 
	Clearly, the mapping $x\mapsto\mu_x$ is continuous. 
	Further,
	\[S_n^{\mc F'} f(x)\rightarrow Sf(x)=(S^*\delta_x)(f)=\mu_x(f).\]
	We hence conclude that $x$ is $\mu_x$-generic along $\mc F'$.
\end{proof}

\begin{rem}\label{rem: invariance of measures in Thm 34}
	In the above statement, if $\mc F'$ is a left
	F\o lner sequence, then $\mu_x\in \mc M(X,G)$ by a standard 
	Krylov-Bogolyubov argument.
	We will see later (last part of Theorem \ref{the:Weyl-weak-mean}) that the same holds
	true if $\mc F'$ is a right F\o lner sequence.
\end{rem}

\begin{cor}\label{cor:operator_pos_cont_projection}
	Suppose $(X,G)$ is $\mc F$-weakly mean equicontinuous for some F\o lner sequence $\mc F$.
	Then the operator $S:C(X)\to C(X)$ from Theorem~\ref{thm: characterisations of weak mean equicontinuity}
	is a positive contractive projection, that is, 
	$Sf\geq 0$ if $f\geq 0$, $S\boldsymbol{1}\leq \boldsymbol 1$ and $S^2=S$.
\end{cor}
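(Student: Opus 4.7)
The plan is to transfer the corresponding properties of the averaging operators $S_n^{\mc F'}$ to the limit operator $S$ via the pointwise identity $Sf(x)=\mu_x(f)$ furnished by Theorem~\ref{thm: characterisations of weak mean equicontinuity}, where $\mc F'$ is the subsequence produced there.

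Positivity and the inequality $S\boldsymbol{1}\leq \boldsymbol{1}$ are essentially immediate. Each $S_n^{\mc F'}$ is a Haar-weighted average of the composition operators $f\mapsto(x\mapsto f(gx))$, hence manifestly sends non-negative functions to non-negative functions and fixes the constant function $\boldsymbol{1}$. Since $S_n^{\mc F'}f(x)\to Sf(x)$ pointwise, both properties pass to the limit; in fact one obtains the equality $S\boldsymbol{1}=\boldsymbol{1}$.

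The main content lies in the projection identity $S^2=S$. Here the plan is as follows. Fix $f\in C(X)$ and $x\in X$; by the very definition of $S$,
\[
  S(Sf)(x)=\mu_x(Sf)=\int_X\! \mu_y(f)\,d\mu_x(y).
\]
Since $S_n^{\mc F'}f\to Sf$ pointwise and $\|S_n^{\mc F'}f\|_\infty\leq\|f\|_\infty$, dominated convergence yields $\mu_x(Sf)=\lim_{n\to\infty}\mu_x(S_n^{\mc F'}f)$. Fubini then rewrites
\[
  \mu_x(S_n^{\mc F'}f)=\frac{1}{\theta(F_n)}\int_{F_n}\!\int_X\! f(gy)\,d\mu_x(y)\,d\theta(g).
\]
At this stage I would invoke Remark~\ref{rem: invariance of measures in Thm 34}, according to which each $\mu_x$ is $G$-invariant (whether $\mc F'$ is a left or a right F\o lner sequence). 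The inner integral then collapses to $\mu_x(f)$ for every $g$, so $\mu_x(S_n^{\mc F'}f)=\mu_x(f)=Sf(x)$ for all $n$, and passing to the limit gives $S^2f(x)=Sf(x)$.

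The only genuine obstacle is the $G$-invariance of $\mu_x$: for a left F\o lner sequence this is the standard Krylov-Bogolyubov argument, whereas for a right F\o lner sequence one relies on the forward reference to Theorem~\ref{the:Weyl-weak-mean} recorded in Remark~\ref{rem: invariance of measures in Thm 34}. A brief check should confirm that the proof of that latter invariance does not itself depend on Corollary~\ref{cor:operator_pos_cont_projection}, so that no circularity is introduced.
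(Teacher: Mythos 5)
Your proposal is correct and follows essentially the same route as the paper: positivity and contractivity pass to the pointwise limit, and the projection identity comes from the $G$-invariance of $\mu_x=S^*\delta_x$ (Krylov--Bogolyubov), which forces $S_n^*\mu_x=\mu_x$ and hence $S^*\mu_x=\mu_x$. The only (harmless) difference is that you specialize to Dirac measures and obtain $S^2f(x)=Sf(x)$ pointwise, whereas the paper proves $S^*(S^*\mu)=S^*\mu$ for all $\mu\in\mc M(X)$ and then invokes Hahn--Banach to separate points; also note that by the paper's standing convention the corollary concerns left F\o lner sequences only, so your cautionary remark about the right F\o lner case (handled later in Theorem~\ref{the:Weyl-weak-mean}) is not needed here.
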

\begin{proof}
	Recall that $S$ is the limit of a subsequence of $(S_n^{\mc F})_{n\in\N}$.
	For notational convenience, we denote this subsequence simply by $(S_n)$.
	The positivity and $S\boldsymbol{1}\leq \boldsymbol 1$ are straightforwardly inherited from the elements of $(S_n)$.
	
	To see that $S$ is a projection, first observe
	\[
		(S_n^*\mu)f=\mu(S_n f)\xrightarrow{n\to\infty}\mu(S f)=(S^*\mu)f\quad\textnormal{for all}\quad \mu\in\mc M(X), f\in C(X),
	\]
	using $\lim_{n\to\infty}S_n f(x)=S f(x)$ for all $x\in X$ and 
	dominated convergence.
	Accordingly, by a standard Krylov-Bogolyubov argument, $S^*\mu\in\mc M(X,G)$.
	Now, due to the invariance of $S^*\mu$, we have $S_n^*(S^*\mu)=S^*\mu$ for each $n\in\N$. 
	This in turn gives for $\mu\in\mc M(X)$ and $f\in C(X)$
	\[
		(S^*(S^*\mu))f=\lim_{n\to\infty}(S_n^*(S^*\mu))f=(S^*\mu)f,
	\]
	that is, $S^*$ is a projection.
	Finally, due to the Hahn-Banach Theorem, $\mc M(X)$ separates points in $C(X)$ so that the above implies $S^2=S$.
\end{proof}

\begin{example}\label{exmp: F-weak on interval}
We next describe a F\o lner sequence $\hat {\mc F}$ with respect to which $(\I,G)$---where $G=\textrm{Hom}_+(\mathbb I)$ acts on $\I$ in the obvious way---is $\hat {\mc F}$-weakly mean equicontinuous.

To that end, recall from Example~\ref{exmp: hom+ on interval} that there actually is some thin F\o lner sequence $(F_n)$ in $G$.
Let $(h_n)$ be a dense sequence in $G$.
In the following, we may assume without loss of generality (by possibly going over to a subsequence) that
$\frak{m}_{B_{1/n}(\Id)}(F_N,h_nF_N)/|F_N|\geq 1-1/N$ whenever $N\geq n$,
where $B_{1/n}(\Id)$ is the $1/n$-ball centred at the identity $\Id$.

Now, given $x\in \I$ and $\eps>0$, let us refer to $g\in G$ as \emph{$(x,\eps)$-repelling} if $gy<\eps$ whenever $y<x-\eps$ and $gy>1-\eps$ whenever $y>x+\eps$.
Note that for each $x\in  \I$ and each $n\in \N$, there is $g_n^x\in G$ such that each element in $F_ng_n^x$ is $(x,1/n^2)$-repelling.
Indeed, with $\delta>0$ such that $g\delta<1/n^2$ and $g(1-\delta)>1-1/n^2$ for all $g\in F_n$, we may choose
$g_n^x$ to be any $(x,\min\{\delta,1/n^2\})$-repelling element in $G$.

Define $\hat {\mc F}=(\hat F_n)$ by $\hat F_n= \bigcup_{x=0,\frac{1}n,\ldots,1}F_ng_n^x$.
Then $\frak{m}_{B_{1/n}(\Id)}(\hat F_N,h_n \hat F_N)/|\hat F_N|\geq 1- 1/N$ if $N\geq n$ and it is easy to see that $\hat {\mc F}$ is hence F\o lner.
Further, for each $y\in \I$, we have
\begin{align*}
 \frac{\{g\in \hat F_n \: gy \leq 1/n^2\}}{|\hat {F_n}|}\to 1-y
 \ \qquad \text{and} \qquad \
 \frac{\{g\in \hat F_n \: gy \geq 1- 1/n^2\}}{|\hat {F_n}|}\to y.
\end{align*}
In other words, for each $f\in C(\I)$ and each $y\in \I$, we have
$S^{\hat {\mc F}}f(y)=(1-y)f(0)+yf(1)$, that is, $y$ is $\mu_y$-generic
along $\hat {\mc F}$, with $\mu_y=(1-y)\delta_0+y\delta_1$.
\end{example}

\begin{lem}\label{lem: unique ergodicity support ergodic measure}
	Let $(X,G)$ be a system and let $\mc F$ be a F\o lner sequence in $G$.
	If $(X,G)$ is $\mc F$-weakly mean equicontinuous, then the support of each ergodic measure is uniquely ergodic.
\end{lem}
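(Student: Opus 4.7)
Let $\mu\in \mc M(X,G)$ be ergodic, set $Y=\supp(\mu)$, and aim to show that any $\nu\in \mc M(X,G)$ with $\supp(\nu)\subseteq Y$ equals $\mu$. The plan is first to apply Theorem~\ref{thm: characterisations of weak mean equicontinuity} to the given $\mc F$, producing a subsequence $\mc F'=(F_n)$ along which every $x\in X$ is $\mu_x$-generic and the map $x\mapsto \mu_x$ is continuous; by Remark~\ref{rem: invariance of measures in Thm 34}, each $\mu_x$ is invariant. Pairing the pointwise convergence $S_n^{\mc F'}f(x)\to \mu_x(f)$ with invariance of $\mu$ and bounded convergence then yields the barycentric identity $\mu(f)=\int \mu_x(f)\, d\mu(x)$ for every $f\in C(X)$, so that $\mu$ is the barycenter of the pushforward of $\mu$ under $x\mapsto \mu_x$.

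The decisive step is to upgrade this to pointwise equality $\mu_x=\mu$ for every $x\in Y$. For this I would invoke the mean ergodic theorem for the Koopman representation of $G$ on $L^2(\mu)$ along $\mc F'$: the averages $S_n^{\mc F'}$ converge in the strong operator topology to the orthogonal projection onto the $G$-invariant subspace, which for ergodic $\mu$ is the constant functional $f\mapsto \mu(f)$. Extracting a $\mu$-a.e. convergent subsequence and comparing with the everywhere pointwise convergence $S_n^{\mc F'}f(x)\to \mu_x(f)$ given by Theorem~\ref{thm: characterisations of weak mean equicontinuity} forces $\mu_x(f)=\mu(f)$ for $\mu$-a.e. $x$, and running this over a countable dense subset of $C(X)$ upgrades the identity to $\mu_x=\mu$ on a set of full $\mu$-measure. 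Continuity of $x\mapsto \mu_x$ then shows that the closed set $\{x\:\mu_x=\mu\}$ automatically contains $Y=\supp(\mu)$.

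Unique ergodicity on $Y$ then follows by a short accounting: for $\nu\in \mc M(X,G)$ with $\supp(\nu)\subseteq Y$, invariance gives $\int f\, d\nu=\int S_n^{\mc F'}f\, d\nu$ for every $n$, and dominated convergence combined with $\mu_x=\mu$ on $\supp(\nu)$ sends the right-hand side to $\int \mu_x(f)\, d\nu(x)=\mu(f)$, so $\nu=\mu$. The main obstacle I anticipate is justifying the mean ergodic theorem for the specific subsequence $\mc F'$ in the thin F\o lner setting relevant for second countable non-locally compact $G$ such as $\mathrm{Hom}_+(\I)$: in the $\sigma$-compact locally compact case with thick $\mc F$ the result is the classical von Neumann-type statement, while the thin case should reduce to the framework developed in Appendix~\ref{appendix: thin folner}.
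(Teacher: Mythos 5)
Your proof is correct, but it takes a genuinely different route from the paper's. The paper reduces to two ergodic measures $\mu,\nu$ with $\operatorname{supp}(\nu)\subseteq\operatorname{supp}(\mu)$, uses Corollary~\ref{cor: almost all points are generic} to find a common subsequence $\mc F'$ of $\mc F$ along which some $y\in\operatorname{supp}(\nu)$ is $\nu$-generic and some $x_n\to y$ are $\mu$-generic, and then notes that $\mathcal{W}_{\mathcal{F}}(x_n,y)\geq\mathcal{W}_{\mathcal{F}'}(x_n,y)=W(\mu,\nu)$, so continuity of $\mathcal{W}_{\mathcal{F}}$ at the diagonal forces $W(\mu,\nu)=0$; this works entirely at the level of the pseudometric and never invokes Theorem~\ref{thm: characterisations of weak mean equicontinuity}. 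You instead pass through that theorem (and Remark~\ref{rem: invariance of measures in Thm 34}) to get an everywhere-defined continuous assignment $x\mapsto\mu_x$ of invariant measures along $\mc F'$, identify $\mu_x=\mu$ for $\mu$-a.e.\ $x$ via the $L^2$ mean ergodic theorem (correctly applicable to $\mc F'$, since subsequences of F\o lner sequences are F\o lner, and available in the thin setting by the appendix), upgrade to all of $\operatorname{supp}(\mu)$ using that $\{x\:\mu_x=\mu\}$ is closed and of full measure, and finally integrate $S_n^{\mc F'}f$ against an arbitrary invariant $\nu$ supported in $\operatorname{supp}(\mu)$. What your approach buys: it handles every invariant $\nu$ with $\operatorname{supp}(\nu)\subseteq\operatorname{supp}(\mu)$ directly, with no reduction to ergodic $\nu$ via extreme points, and it establishes the stronger fact that the generic-measure map is constant equal to $\mu$ on $\operatorname{supp}(\mu)$ --- essentially the inclusion $\operatorname{supp}(\mu_x)\subseteq D_x$ exploited later in Theorem~\ref{thm:rel_bounded_lin_op}(b). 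What it costs is the heavier machinery: the paper's argument needs only Corollary~\ref{cor: almost all points are generic} and continuity of $\mathcal{W}_{\mathcal{F}}$.
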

\begin{proof}
	Consider two ergodic measures $\mu,\nu \in \mc M(X,G)$ and assume without loss of generality that
	$\operatorname{supp}(\nu)\subseteq \operatorname{supp}(\mu)$. 
	By Corollary~\ref{cor: almost all points are generic}, there exists a subsequence $\mathcal{F}'$ of $\mathcal{F}$ such that $\mu$-almost every point is $\mu$-generic along $\mc F'$ and $\nu$-almost every point is $\nu$-generic along $\mathcal{F}'$.
	In particular, there is $y\in X$ which is $\nu$-generic along $\mathcal{F}'$ and a sequence $(x_n)_{n\in \mathbb{N}}$ where each $x_n$ is $\mu$-generic along $\mathcal{F}'$ and $x_n\to y$. 
	Hence,
	$\mathcal{W}_\mathcal{F}(x_n,y)\geq \mathcal{W}_{\mathcal{F}'}(x_n,y)=W(\mu,\nu)$
	and the continuity of $\mathcal{W}_\mathcal{F}$ implies $\mu=\nu$. 
\end{proof}

For the convenience of the reader, we include a proof of the next statement; for $\Z$-actions, see also \cite[Proposition 3.9]{Furstenberg1981}.

\begin{lem}\label{lem:Furstenberglemma}
Let $(X,G)$ be a system.
For every ergodic measure $\mu$ and every transitive point $x\in X$, there is a
F\o lner sequence $\mathcal{F}$ such that $x$ is $\mu$-generic along 
$\mathcal{F}$. 	
\end{lem}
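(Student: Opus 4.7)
The strategy is to construct $\mc F$ by right-translating a Følner sequence along which $\mu$-almost every point is $\mu$-generic, exploiting the density of the orbit of $x$ to transfer genericity from a suitable point $y$ to $x$ itself.

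First, invoke Corollary~\ref{cor: almost all points are generic} to obtain a Følner sequence $\mc F=(F_n)_{n\in\N}$ along which $\mu$-almost every point is $\mu$-generic. Since $\operatorname{supp}(\mu)\ssq\overline{Gx}=X$, we may pick a $\mu$-generic point $y\in\operatorname{supp}(\mu)$ and, by transitivity of $x$, elements $g_n\in G$ with $g_nx\to y$. The sequence $\mc F'=(F_ng_n)$ is then still a left Følner sequence, and it satisfies the key identity
\[
S_n^{\mc F'}f(x)=S_n^{\mc F}f(g_nx)\qquad(f\in C(X)).
\]
In the thin case, both the Følner property and this identity are immediate from the bijectivity of right multiplication by $g_n$ (cf.\ the remark preceding Proposition~\ref{prop:thinFolner_combine}). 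In the thick case, they follow from the observation that the modular function factor $\Delta(g_n)$ appears both in $\theta(F_ng_n)=\Delta(g_n)\theta(F_n)$ and in the corresponding change of variables in the integral, and therefore cancels out.

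The main step is then to refine the choice of $g_n$ so that the genericity of $y$ is transferred to $x$. Fix a countable dense family $(f_k)_{k\in\N}$ in $C(X)$. Because $\alpha\colon G\times X\to X$ is jointly continuous and $F_n$ is compact (resp.\ finite), each function $z\mapsto S_n^{\mc F}f_k(z)$ is continuous on $X$. By transitivity we may therefore pick $g_n$ such that $|S_n^{\mc F}f_k(g_nx)-S_n^{\mc F}f_k(y)|<1/n$ for all $k=1,\dots,n$. Combined with $S_n^{\mc F}f_k(y)\to\mu(f_k)$, the identity above yields $S_n^{\mc F'}f_k(x)\to\mu(f_k)$ for every $k$. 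A density argument using the uniform boundedness of the $S_n^{\mc F'}$ then upgrades this to $S_n^{\mc F'}f(x)\to\mu(f)$ for every $f\in C(X)$, i.e., $x$ is $\mu$-generic along $\mc F'$.

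The main conceptual obstacle is to verify that right-translating a left Følner sequence preserves the Følner property while also transforming the averaging operators in the intended way. For thin Følner sequences this is essentially built into the definition through the invariance of the matching numbers under right translation; for thick Følner sequences it is a short but delicate computation based on the cancellation of modular function factors outlined above. Everything else is a routine approximation argument combining joint continuity of the action, density of Lipschitz (or any countable dense) subfamilies of $C(X)$, and the Banach--Steinhaus--type boundedness of the averaging operators.
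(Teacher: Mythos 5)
Your proof is correct and takes essentially the same approach as the paper's: obtain a generic point $y$ from Corollary~\ref{cor: almost all points are generic}, pass to a subsequence with quantitative convergence, use continuity of $z\mapsto S_n^{\mc F}f_j(z)$ together with transitivity to choose $g_n$ with $g_nx$ sufficiently close to $y$, and conclude that $x$ is $\mu$-generic along the right-translated sequence $(F_ng_n)$. The only difference is cosmetic: you make explicit the verification that $(F_ng_n)$ remains left F\o lner and that the modular-function factors cancel in the averaged integrals, points the paper leaves implicit.
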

\begin{rem}
 Recall that $x\in X$ is \emph{transitive} if $\overline{Gx}=X$.
\end{rem}

\begin{proof}
Let $(f_n)_{n\in \mathbb{N}}$ be dense in $C(X)$. 
	By Corollary~\ref{cor: almost all points are generic}, there is a F\o lner sequence $(F_n)_{n\in \mathbb{N}}$ and a point $y\in X$ such that $y$ is $\mu$-generic along $(F_n)_{n\in \mathbb{N}}$. 
	 By possibly restricting to a subsequence, we may assume without loss of generality that 
	$\left|1/\theta(F_n)\int_{F_n}\!f_j(gy)\, d\theta(g)-\mu(f_j)\right|\leq 1/(2n)$
	for all $j=1,\dots,n$. 
	For $n\in \mathbb{N}$, there exists $\eps_n>0$ such that for all $z\in B_{\eps_n}(y)$ and $j\in \{1,\dots,n\}$, we have 
	$\left|1/\theta(F_n)\int_{F_n}\!f_j(gz)\, d\theta(g)-\mu(f_j)\right|\leq 1/n$.
	By transitivity of $x$, there exists $g_n\in G$ such that $g_nx\in B_{\eps_n}(y)$. 
	Therefore,
	\[
	\left|\frac{1}{\theta(F_ng_n)}\int_{F_ng_n}f_j(gx)\, d\theta(g)-\mu(f_j)\right|=\left|\frac{1}{\theta(F_n)}\int_{F_n}f_j(gg_nx)\, d\theta(g)-\mu(f_j)\right|\leq \frac{1}{n}.
	\]
	This shows that $x$ is $\mu$-generic along the F\o lner sequence $(F_ng_n)_{n\in \mathbb{N}}$.
\end{proof}

The next assertion generalizes Theorems 3.5 and 4.3 in \cite{XuZheng2022}
beyond the setting of countable discrete amenable groups.

\begin{thm}\label{the:Weyl-weak-mean}
	Let $(X,G)$ be a system. 
	The following statements are equivalent. 
\begin{itemize}
	\item[(i)] $(X,G)$ is weakly mean equicontinuous.
	\item[(ii)] For all $x\in X$, the orbit closure $\overline{Gx}$ is uniquely
    ergodic with an ergodic measure $\mu_x$ and the map $x\mapsto \mu_x$ is continuous. 
	\item[(iii)] For every F\o lner sequence $\mc F$ in $G$, $S_n^{\mc F}$ converges in the strong operator topology.
\end{itemize}	
Moreover, if $(X,G)$ is $\mc F$-weakly mean equicontinuous for some F\o lner sequence $\mc F$ and
there is an invariant measure $\mu$ with full support, that is, $\supp(\mu)=X$, then
(i)--(iii) are satisfied.

Finally, if $\mathcal{W}_{\mathcal{F}}$ is continuous
for some right F\o lner sequence $\mc F$ in $G$, then (i)--(iii) are satisfied. 
\end{thm}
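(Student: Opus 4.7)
The plan is to pass to a subsequence of $\mc F$ on which Theorem~\ref{thm: characterisations of weak mean equicontinuity} produces a continuous pointwise-generic assignment $x\mapsto\mu_x$ (without yet knowing that $\mu_x$ is invariant) and then to establish (ii) directly. With (ii) in hand, the equivalences (i)$\Leftrightarrow$(ii)$\Leftrightarrow$(iii) already proven in this theorem close the argument, and Remark~\ref{rem: invariance of measures in Thm 34} follows as a by-product.

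The crucial new step that exploits the right F\o lner hypothesis is the identity $\mu_{hx}=\mu_x$ for every $h\in G$ and every $x\in X$. In the thick case, the substitution $g'=gh$ together with right invariance of the right Haar measure $\theta_r$ yields $S_n^{\mc F'*}\delta_{hx}=\frac{1}{\theta_r(F_n')}\int_{F_n'h}\delta_{gx}\,d\theta_r(g)$, so the right F\o lner property gives $W\bigl(S_n^{\mc F'*}\delta_x,S_n^{\mc F'*}\delta_{hx}\bigr)\leq\operatorname{diam}(X)\cdot\theta_r(F_n'\triangle F_n'h)/\theta_r(F_n')\to 0$. In the thin case the $V$-matching number replaces the symmetric-difference bound: by joint continuity of the action and compactness of $X$, for every $\varepsilon>0$ one finds a neighbourhood $V$ of $e_G$ with $\sup_{y\in X}d(vy,y)<\varepsilon$ for $v\in V$; building a coupling from the matching that realises $\mathfrak{m}_V(F_n',F_n'h)$ then bounds $W\bigl(S_n^{\mc F'*}\delta_x,S_n^{\mc F'*}\delta_{hx}\bigr)$ above by $\varepsilon+\bigl(1-\mathfrak{m}_V(F_n',F_n'h)/|F_n'|\bigr)\operatorname{diam}(X)\to\varepsilon$, and $\varepsilon>0$ was arbitrary.

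Combined with continuity of $x\mapsto\mu_x$, the identity $\mu_{hx}=\mu_x$ extends to $\mu_y=\mu_x$ for every $y$ in the orbit closure $Y:=\overline{Gx}$; write $\mu_Y$ for this common value. To identify $\mu_Y$ with the unique $G$-invariant measure on $Y$, pick any $\nu\in\mc M(Y,G)$, which exists by Krylov-Bogolyubov applied to the amenable action on the compact invariant set $Y$. Fubini together with $G$-invariance of $\nu$ gives $\int S_n^{\mc F'}f\,d\nu=\int f\,d\nu$ for every $f\in C(X)$; bounded convergence then yields $\int\mu_y(f)\,d\nu(y)=\int f\,d\nu$. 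Since $\nu$ is supported on $Y$ and $\mu_y\equiv\mu_Y$ there, this simplifies to $\mu_Y(f)=\nu(f)$, whence $\mu_Y=\nu$. Thus $\mu_Y$ is the unique $G$-invariant measure on $Y$, is automatically ergodic, and (ii) is established.

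The principal obstacle distinguishing this from the left F\o lner case is precisely the invariance of $\mu_x$: the standard Krylov-Bogolyubov proof of invariance does not apply to right F\o lner sequences acting from the left. The detour above---first proving orbit-constancy of $\mu_x$ via the right F\o lner property, and only then identifying $\mu_x$ with some pre-existing invariant measure on $\overline{Gx}$ through the barycentre identity---is what circumvents this difficulty.
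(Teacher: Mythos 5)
Your argument addresses only the final clause of the theorem (continuity of $\mathcal{W}_{\mathcal{F}}$ for a right F\o lner sequence implies (i)--(iii)). The core content of the statement --- the three-way equivalence (i)$\Leftrightarrow$(ii)$\Leftrightarrow$(iii) and the ``moreover'' clause about an invariant measure of full support --- is nowhere established: you appeal to ``the equivalences already proven in this theorem'', but those equivalences are precisely what the theorem asserts, and they occupy most of the paper's proof. In particular, (i)$\Rightarrow$(ii) requires comparing two arbitrary invariant measures on an orbit closure via Lemma~\ref{lem:Furstenberglemma} and Corollary~\ref{cor: almost all points are generic}, and (ii)$\Rightarrow$(iii) requires upgrading the pointwise convergence $S_nf(x)\to\mu_x(f)$ (from Theorem~\ref{thm: uniform ergodic theorem}) to uniform convergence, which the paper does by an ergodic-decomposition and Portmanteau argument. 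None of this is supplied, so as a proof of the stated theorem the proposal is incomplete.

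The part you do prove is correct and close in spirit to the paper's treatment of the right F\o lner case. Your key step --- that averaging along a right F\o lner sequence makes $x\mapsto\mu_x$ constant on orbits, hence by continuity (Theorem~\ref{thm: characterisations of weak mean equicontinuity}) constant on orbit closures --- is exactly the paper's observation that $S^{\mc F'}f(gx)=S^{\mc F'}f(x)$. You diverge in the endgame: the paper concludes unique ergodicity of each orbit closure from Proposition~\ref{prob:right Foelner implies unique ergodicity} together with Tietze's Extension Theorem, whereas you derive it from the barycentre identity $\int\mu_y(f)\,d\nu(y)=\nu(f)$ for an arbitrary $\nu\in\mc M(\overline{Gx},G)$ supplied by Corollary~\ref{cor: Krylov-Bogolyubov}; that is a clean, self-contained alternative which also delivers the invariance of $\mu_x$ claimed in Remark~\ref{rem: invariance of measures in Thm 34}. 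One caveat in the thin case: your coupling needs the matched element $\phi(g)$ to be a \emph{left} perturbation of $g$, i.e.\ $\phi(g)\in Vg$, so that $d(\phi(g)x,gx)\le\sup_{v\in V,\,y\in X}d(vy,y)$; with the opposite convention $\phi(g)\in gV$ the bound fails without equicontinuity of $\{g\in F_n'\}$, so you should verify that the adjusted definition of $\mathfrak{m}_V$ for right F\o lner sequences is the one your estimate requires.
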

\begin{proof}
	Note that (iii) $\Rightarrow$ (i) follows from Theorem~\ref{thm: characterisations of weak mean equicontinuity} (in combination with  Proposition~\ref{pro:WeylweakmeanequicontinuityviaallFolner}).
	
	We first discuss (i) $\Rightarrow$ (ii).
	Due to Theorem~\ref{thm: characterisations of weak mean equicontinuity} and Remark~\ref{rem: invariance of measures in Thm 34}, it suffices to show that for every $x\in X$, the orbit closure $\overline{Gx}$ is uniquely ergodic. 
	To that end, consider $\mu,\nu\in \mathcal{M}(\overline{Gx},G)$. 
	Let $\eps>0$, $y\in \operatorname{supp}(\mu)$ and pick $z\in {Gx}$ such that $\mathcal{W}(y,z)<\eps$. 
	By Lemma~\ref{lem:Furstenberglemma}, there exists a F\o lner sequence $\mathcal{F}$ such that $z$ is $\nu$-generic along $\mathcal{F}$. 
	By Corollary~\ref{cor: almost all points are generic}, there exists a subsequence $\mathcal{F}'$ of $\mathcal{F}$ such that $\mu$-almost every point is $\mu$-generic along $\mc F'$. 
	Accordingly, there is $y'\in \operatorname{supp}(\mu)$ such that $y'$ is $\mu$-generic along $\mathcal{F}'$ and $\mathcal{W}(y,y')<\eps$. 
	Now,
	\[
	2\eps\geq \mathcal{W}(y',y)+\mathcal{W}(y,z)\geq\mathcal{W}(y',z)\geq \mathcal{W}_{\mathcal{F}'}(y',z)=W(\mu,\nu),
	\] 
	where we used that $W$ is compatible with the weak*-topology in the last step.
	As $\eps>0$ was arbitrary, we conclude $\mu=\nu$. 
  
  For (ii) $\Rightarrow$ (iii), we proceed in a similar spirit as in the proof of \cite[Theorem~4.4]{CaiKwietniakLiPourmand2022}.
  First, note that for each $f\in C(X)$,
  and each F\o lner sequence $\mc F$, Theorem~\ref{thm: uniform ergodic theorem} and point (ii) give that $x\mapsto S_n^{\mc F}f(x)=S_nf(x)$ converges pointwise to the continuous function $x\mapsto \mu_x(f)$.
  Note that strong convergence of $S_n$ is equivalent to uniformity of this convergence (for each $f$). 
  Hence, we assume
  for a contradiction that there is $f\in C(X)$ and $\eps>0$ such that for all $N\in \N$, there is $n\geq N$ and $x_n\in X$ with $|S_nf(x_n)-\mu_{x_n}(f)|>\eps$.
  By possibly going over to a subsequence, we may assume without loss of generality that $S_{n}^*\delta_{x_n}$ converges to some $\nu \in \mc M(X,G)$ (using Krylov-Bogolyubov) and that $x_n$ converges to some $x\in X$.
  Note that $|\nu(f)-\mu_{x}(f)|\geq \eps$.
  
  The ergodic decomposition of $\nu$ reads $\nu=\int_{\mc M(X,G)}\!\mu\,d\lambda(\mu)$ (see e.g.\ \cite[page 77]{Phelps2001}) where $\lambda$-almost every measure $\mu$ is ergodic and necessarily satisfies $\supp(\mu)\ssq \supp(\nu)$.
  Among those measures, there must be some $\mu_0$ with $|\mu_0(f)-\mu_{x}(f)|\geq \eps$.
  Pick some $y\in \supp(\mu_0)$ and observe that by (ii), $\mu_0=\mu_y$.

  Finally, by the Portmanteau Theorem (and since $y\in \supp(\nu)\supseteq \supp(\mu_y)$), for every open neighbourhood $U$ of $y$, we have
  \[
	\liminf_{n\to\infty}S_{n}^*\delta_{x_n}(U)=\liminf_{n\to\infty} 1/\theta (F_n)\cdot \theta(\{gx_n \in U\: g\in F_n\})\geq \nu(U)>0.
  \]
    As a consequence, there is a sequence $g_n$ in $G$ with $g_nx_n\to y$ so that, due to the continuity of $x\mapsto \mu_x$, we have $\mu_{g_nx_n}\to\mu_y$.
    At the same time, due to the unique ergodicity of orbit closures, $\mu_{g_nx_n}=\mu_{x_n}\to \mu_x$.
    It follows that $\mu_x=\mu_y$ in contradiction to the assumptions on $\mu_y=\mu_0$.

  To see the ``moreover''-part, given $x\in X$, observe that due to \cite[Lemma~6]{Farrell1962} (or, alternatively, the ergodic decomposition of $\mu$),
  there is a sequence $(\mu_n)$ of ergodic measures in $\mc M(X,G)$ and a sequence $(x_n)$ in $X$ with $x_n\in \supp(\mu_n)$ such that $x_n\to x$.
  By Lemma~\ref{lem: unique ergodicity support ergodic measure}, $\mc W_{\mc F}(x_n,gx_n)=0$ for each $g\in G$ so that continuity of $\mc W_{\mc F}$ gives
  \begin{align}\label{eq: constant on orbit}
   \mc W_{\mc F}(x,gx)=\lim_{n\to\infty}\mc W_{\mc F}(x_n,gx_n)=0.
  \end{align}
  Due to Theorem~\ref{thm: characterisations of weak mean equicontinuity}, we may assume without loss of generality
  that for each $f\in C(X)$, $y\mapsto S^{\mc F}f(y)=\lim_{n\to\infty}S^{\mc F}_nf(y)=\mu_y(f)$ is well-defined and continuous.
  With \eqref{eq: constant on orbit}, this gives that 
  $y\mapsto S^{\mc F}f(y)$ is actually constant on $\overline{Gx}$.
  Now, Theorem~\ref{thm: uniform ergodic theorem} and Tietze's Extension Theorem give that $\overline{Gx}$ is uniquely ergodic.
  As $x$ was arbitrary, this shows (ii).

  To see the last part,
  recall from Theorem~\ref{thm: characterisations of weak mean equicontinuity} that there is a bounded linear operator $S^{\mc F'}\: C(X)\to C(X)$ and a subsequence $\mc F'$ of $\mc F$ such that $S_n^{\mc F'}f(x)$ converges to $S^{\mc F'}f(x)$ for each $f\in C(X)$ and $x\in X$.
  Observe that since $\mc F'$ is right F\o lner, we have for each $f\in C(X)$ and all $g\in G$ that $S^{\mc F'}f(x)=S^{\mc F'}f(gx)$.
  As $S^{\mc F'}f$ is continuous, we therefore have that
  $S^{\mc F'}f$ is constant on orbit closures.
  With Proposition~\ref{prob:right Foelner implies unique ergodicity} and Tietze's Extension Theorem, this implies (ii) (and $S^{\mc F'}f(x)=\mu_x(f)$ for each $x\in X$).
\end{proof}

\begin{example}
As a consequence of the above, 
a transitive system is weakly mean equicontinuous if and only if it is uniquely ergodic.
In particular, the system $(\I,G)$ from Example~\ref{exmp: F-weak on interval} is not weakly mean equicontinuous.
However, the canonical action of $G=\textrm{Hom}_+(\I)$ on $\R/\Z$---obtained by identifying $0$ and $1$---clearly is weakly mean equicontinuous.
\end{example}

We end this section by combining the above results with some aspects of the general theory of positive contractive projections and averaging operators, see 
\cite[Section 11.3]{Rao2005} as well as \cite{Kelley1958,Seever1966,Hadded2007} (and references therein).

\begin{thm}\label{thm:rel_bounded_lin_op}
Let $\mc F$ be a left or right F\o lner sequence and suppose that $(X,G)$ is $\mc F$-weakly mean equicontinuous.
  Then there exists a subsequence $(F_n)$ of $\mc F$ and a bounded 
  linear operator $S\:C(X)\to C(X)$ such that for all $f\in C(X)$ and $x\in X$
  \begin{equation}\label{eq:conv_to_dual}
    \lim_{n\to\infty}1/\theta(F_n)\cdot \int_{F_n}\!f(gx)\, d\theta(g)=Sf(x).
  \end{equation}
  Vice versa, if there is a Følner sequence $(F_n)$ and a bounded linear
  operator $S\:C(X)\to C(X)$ satisfying $\eqref{eq:conv_to_dual}$, then $(X,G)$ is 
  $(F_n)$-weakly mean equicontinuous.
  Moreover, $S$ is a positive contractive projection and the following statements hold.
  \begin{enumerate}[(a)]
    \item $S(f Sh)=S(Sf Sh)$ for all $f,h\in C(X)$ (\emph{Seever's identity}).
    \item $S(f Sh)=Sf Sh$ for all $f,h\in C(X)$ ($S$ is an \emph{averaging operator}) if and only if $\mu_x$ is ergodic for all $x\in X$.
    \item $T_gS=S$ for all $g\in G$ (where $T_g f=g.f$ for $g\in G$ and $f\in C(X)$) if and only if each orbit closure is uniquely ergodic.
  \end{enumerate}
\end{thm}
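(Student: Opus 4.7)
The plan is to assemble the statement from Theorem~\ref{thm: characterisations of weak mean equicontinuity}, Corollary~\ref{cor:operator_pos_cont_projection}, and the abstract theory of positive contractive projections on $C(K)$-spaces; parts~(b) and (c) then require additional dynamical arguments. The existence of $(F_n)$ and $S$ satisfying \eqref{eq:conv_to_dual} is the implication (i)$\Rightarrow$(iii) of Theorem~\ref{thm: characterisations of weak mean equicontinuity}. Conversely, if \eqref{eq:conv_to_dual} holds for some bounded linear $S$, then $S_n^*\delta_x\to S^*\delta_x$ in the weak*-topology for each $x$, whence $\mc W_{\mc F}(x,y)=W(S^*\delta_x,S^*\delta_y)$ is jointly continuous on $X^2$ (using that $W$ is continuous on $\mc M(X)^2$ and that $x\mapsto S^*\delta_x$ is continuous because $Sf\in C(X)$ for every $f$). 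Corollary~\ref{cor:operator_pos_cont_projection} gives that $S$ is a positive contractive projection, with the further property $S\boldsymbol{1}=\boldsymbol{1}$ (since each $S_n\boldsymbol{1}=\boldsymbol{1}$). Part~(a) is then a direct application of Seever's classical theorem~\cite{Seever1966} for unital positive contractive projections on $C(K)$-spaces.

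For (c), observe that $T_gSf(x)=Sf(gx)=\mu_{gx}(f)$, so $T_gS=S$ is equivalent to $\mu_{gx}=\mu_x$ for all $g\in G$ and $x\in X$. Assuming $T_gS=S$, each $Sf$ is continuous and $G$-invariant, hence constant on every orbit closure; for any $\nu\in\mc M(\overline{Gx},G)$, invariance of $\nu$ gives $\nu(S_nf)=\nu(f)$, and dominated convergence yields $\nu(f)=\nu(Sf)$, a value depending only on $\overline{Gx}$. Thus $\overline{Gx}$ is uniquely ergodic. Conversely, if $\overline{Gx}$ is uniquely ergodic, then $\mu_x,\mu_{gx}\in\mc M(\overline{Gx},G)$ (using Remark~\ref{rem: invariance of measures in Thm 34} together with $\overline{G\cdot gx}=\overline{Gx}$), forcing $\mu_{gx}=\mu_x$.

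The main obstacle is (b). For the ``if'' direction, consider the continuous map $\pi\colon X\to\mc M(X,G)$, $z\mapsto\mu_z$, and the pushforward $\lambda_y:=\pi_*\mu_y$. Using $S^*\mu_y=\mu_y$ (established in the proof of Corollary~\ref{cor:operator_pos_cont_projection}), one computes $\int\mu(f)\,d\lambda_y(\mu)=\mu_y(Sf)=\mu_y(f)$, so $\mu_y=\int\mu\,d\lambda_y(\mu)$. By hypothesis $\lambda_y$ is concentrated on ergodic measures, making this the (unique) ergodic decomposition of $\mu_y$; since $\mu_y$ is itself ergodic, $\lambda_y=\delta_{\mu_y}$, i.e.\ $\mu_z=\mu_y$ for $\mu_y$-a.e.\ $z$. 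Therefore $Sh(z)=\mu_y(h)$ for $\mu_y$-a.e.\ $z$, and $S(fSh)(y)=\mu_y(f\cdot Sh)=\mu_y(f)\mu_y(h)=(Sf\cdot Sh)(y)$. For the ``only if'' direction, averaging yields $\mu_y(fSh)=\mu_y(f)\mu_y(h)$, whence $\int f(Sh-\mu_y(h))\,d\mu_y=0$ for all $f\in C(X)$ and hence $Sh=\mu_y(h)$ $\mu_y$-a.e.; continuity of $Sh$ then forces $\mu_x=\mu_y$ for every $x\in\operatorname{supp}(\mu_y)$. To conclude ergodicity of $\mu_y$, decompose $\mu_y=\int\tilde\mu\,d\tilde\kappa(\tilde\mu)$ into ergodic components and---via a further refinement $(F'_n)$ of $(F_n)$ which does not alter $S$---invoke Corollary~\ref{cor: almost all points are generic} to arrange that, for $\tilde\kappa$-a.e.\ $\tilde\mu$, $\tilde\mu$-a.e.\ $x$ is $\tilde\mu$-generic along $(F'_n)$; this yields $\mu_x=\tilde\mu$ $\tilde\mu$-a.e. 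Combined with $\mu_x=\mu_y$ on $\operatorname{supp}(\tilde\mu)\subseteq\operatorname{supp}(\mu_y)$, this forces $\tilde\mu=\mu_y$ for $\tilde\kappa$-a.e.\ $\tilde\mu$, whence $\tilde\kappa=\delta_{\mu_y}$ and $\mu_y$ is ergodic. The technical heart---and the main obstacle---is securing this simultaneous genericity statement for $\tilde\kappa$-a.e.\ ergodic component along the chosen subsequence $(F'_n)$; this is precisely where the appendix's treatment of (thin) F\o lner sequences is called upon.
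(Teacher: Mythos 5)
Your overall architecture matches the paper's: the first two assertions come from Theorem~\ref{thm: characterisations of weak mean equicontinuity} and Corollary~\ref{cor:operator_pos_cont_projection}, (a) is Seever's theorem, and your argument for (c) is a sound, slightly more self-contained variant of the paper's (the step $\nu(S_nf)=\nu(f)$ plus dominated convergence replaces the paper's appeal to the uniform ergodic theorem and Tietze extension). Your ``if'' direction of (b) is genuinely different: the paper invokes Kelley's characterisation of averaging operators together with Lemma~\ref{lem: unique ergodicity support ergodic measure}, whereas you push $\mu_y$ forward under $z\mapsto\mu_z$ and use that an extreme point of the simplex $\mc M(X,G)$ admits only the Dirac mass as a representing measure; this is correct and bypasses Lemma~\ref{lem: unique ergodicity support ergodic measure} entirely.

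The genuine gap is exactly where you flag it, in the ``only if'' direction of (b): you require, for $\tilde\kappa$-almost every ergodic component $\tilde\mu$ of $\mu_y$, a $\tilde\mu$-generic point along one fixed refined subsequence $(F_n')$. Corollary~\ref{cor: almost all points are generic} produces a subsequence for a \emph{single} ergodic measure, and since the decomposition may involve uncountably many components, no diagonal argument yields one subsequence valid for $\tilde\kappa$-a.e.\ component; neither the appendix nor standard pointwise ergodic theory along arbitrary F\o lner sequences supplies such a simultaneous statement. The repair is to drop the full decomposition: you have already shown $\mu_x=\mu_y$ for every $x\in\operatorname{supp}(\mu_y)$, so it suffices to choose \emph{one} ergodic $\tilde\mu$ with $\operatorname{supp}(\tilde\mu)\subseteq\operatorname{supp}(\mu_y)$ (which exists since $\operatorname{supp}(\mu_y)$ is compact and invariant) and one point $x\in\operatorname{supp}(\tilde\mu)$ that is $\tilde\mu$-generic along some subsequence; because the full sequence $S_nf(x)$ already converges to $\mu_x(f)$, this forces $\tilde\mu=\mu_x=\mu_y$, so $\mu_y$ is ergodic. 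This is precisely the paper's route. A smaller point: Corollary~\ref{cor:operator_pos_cont_projection} is proved only for left F\o lner sequences (its Krylov--Bogolyubov step needs left invariance), so for right $\mc F$ the projection property---and the trivialisation of (b) and (c)---should instead be drawn from the last part of Theorem~\ref{the:Weyl-weak-mean}, which gives unique ergodicity of all orbit closures in that case.
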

\begin{proof}
	The first part is Theorem~\ref{thm: characterisations of weak mean equicontinuity}.
	Further, if $\mc F$ is left F\o lner, then
	Corollary~\ref{cor:operator_pos_cont_projection} gives that
	$S$ is a positive contractive projection.
	If, alternatively, $\mc F$ is right F\o lner, then
	it is again obvious that $S$ is 
	positive and contractive (as the expression on the left in \eqref{eq:conv_to_dual} is positive and contractive for each $n$).
	Furthermore, as shown in the last part of the proof of Theorem~\ref{the:Weyl-weak-mean},
	$Sf(x)=\mu_x(f)$ for all $x\in X$ and $f\in C(X)$ with
	$\mu_x\in\mc M(X,G)$ and $x\mapsto\mu_x$ constant on orbit closures.
	This immediately implies $S^2=S$, that is, $S$ is also a positive
	contractive projection for right F\o lner sequences.
	
	Now, positive contractive projections
	on $C(X)$ always fulfil Seever's identity, see \cite[Theorem~1]{Seever1966}.
	This gives item (a).
	
	For item (c), we make use of the fact that $Sf(x)=\mu_x(f)$ for some $\mu_x\in \mc M(X,G)$ (see Theorem~\ref{thm: characterisations of weak mean equicontinuity} and Remark~\ref{rem: invariance of measures in Thm 34} for left F\o lner sequences or again, the last part of the proof of Theorem~\ref{the:Weyl-weak-mean} for right F\o lner sequences).
	Let us first assume that each orbit closure is uniquely
	ergodic.
	Then, for all $f\in C(X)$ and $x\in X$ we get that $T_g Sf(x)=\mu_{gx}(f)=\mu_x(f)=Sf(x)$.
	Note that in the other direction,
	we only have to consider left F\o lner sequences due to the last part of Theorem~\ref{the:Weyl-weak-mean}.
	Now, if $S$ is invariant under all $T_g$'s, we have 
	that $\mu_{gx}(f)=T_g Sf(x)=Sf(x)=\mu_x(f)$ for all $f\in C(X)$, $x\in X$ and $g\in G$.
	That is, $x\mapsto \mu_x$ is constant along orbits.
	As furthermore, $x\mapsto \mu_x$ is continuous (Theorem~\ref{thm: characterisations of weak mean equicontinuity}), we have that 
	$x\mapsto \mu_x$ is constant on orbit closures and we obtain unique ergodicity as in the proof of 
	Theorem~\ref{the:Weyl-weak-mean} (using Corollary~\ref{cor: almost all points are generic}).
	
	Finally, to prove (b), we make use of the following characterization, see \cite[Theorem~2.2]{Kelley1958}:
	the operator $S$ is averaging if and only if for each $x\in X$ the support of $\mu_x$
	is contained in the set 
	\[
		D_x=\{y\in X: Sf(y)=Sf(x)\;\textnormal{for all}\;f\in C(X)\}
		=\{y\in X: \mu_y=\mu_x\}.
	\]
	First, observe that if each orbit closure is uniquely ergodic, then (b) is trivial
	since in this case, $\mu_x$ is obviously ergodic and
	$\supp(\mu_x)\subseteq\overline{Gx}\subseteq D_x$.
	Due to the last part of Theorem~\ref{the:Weyl-weak-mean}, we are hence left to show (b) only for left F\o lner sequences.
	
	Now, according to  Lemma \ref{lem: unique ergodicity support ergodic measure}, 
	we have $\supp(\mu_x)\subseteq D_x$ in case that $\mu_x$ is ergodic.
	Accordingly, $S$ is averaging if all $\mu_x$ are ergodic.
	For the other direction, assume that $\supp(\mu_x)$ is contained in $D_x$.
	Since $\supp(\mu_x)$ is closed and invariant, there is an
	ergodic measure $\mu$ with $\supp(\mu)\subseteq\supp(\mu_x)$.
	By Corollary~\ref{cor: almost all points are generic}, there exists a
	subsequence $(F_n')$ of $(F_n)$ such that $\mu$-almost 
	every point is $\mu$-generic along $(F_n')$.
	In particular, there exists $y\in\supp(\mu)$ with 
	\[
		\mu(f)=\lim_{n\to\infty}S_n^{(F_n')}f(y)=\lim_{n\to\infty}S_n^{(F_n)}f(y)=\mu_y(f)=\mu_x(f),
	\]
	for all $f\in C(X)$.
	Accordingly, all $\mu_x$ are ergodic.
\end{proof}

\begin{rem}
	Note that the relation $T_g S=S$ (for $g\in G$) in Theorem~\ref{thm:rel_bounded_lin_op}~(c)
	is equivalent to $T_g S=S T_g$ (for $g\in G$) since $\mu_x\in\mc M(X,G)$
	(for $x\in X$) is equivalent to $ST_g=S$ (for $g\in G$).
\end{rem}

Recall that $\mc M(X,G)$ is a simplex and that a simplex whose extreme points
form a closed set is called a \emph{Bauer simplex}, see \cite{Phelps2001}.
We can conclude from Theorem~\ref{thm:rel_bounded_lin_op} together with 
\cite[Theorem 3]{Lloyd1963} the following rigidity result which is also
indirectly contained in \cite{CaiKwietniakLiPourmand2022,XuZheng2022} for
countable discrete abelian/amenable groups and $\Fol$ a two-sided 
F\o lner sequence.

\begin{cor}\label{cor3.16}
	Let $(X,G)$ be a topological dynamical system which is $\mc F$-weakly mean equicontinuous
	for some left or right F\o lner sequence $\mc F$.
	Then $\mc M(X,G)$ is a Bauer simplex. 
\end{cor}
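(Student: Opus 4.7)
The plan is to exhibit $\mc M(X,G)$ as the set of fixed probability measures of the dual of a positive contractive projection on $C(X)$, and then invoke \cite[Theorem~3]{Lloyd1963}, which asserts that such a fixed-point set is always a Bauer simplex.

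First, I would apply Theorem~\ref{thm:rel_bounded_lin_op} to obtain a subsequence of $\mc F$ (still denoted $\mc F$) and a positive contractive projection $S\colon C(X)\to C(X)$ such that $Sf(x)=\mu_x(f)$ with $\mu_x\in\mc M(X,G)$ for all $x\in X$ and $f\in C(X)$. The invariance $\mu_x\in\mc M(X,G)$ holds in both the left and right F\o lner settings: in the left case by Theorem~\ref{thm: characterisations of weak mean equicontinuity} together with Remark~\ref{rem: invariance of measures in Thm 34}, and in the right case by the closing argument in the proof of Theorem~\ref{the:Weyl-weak-mean}.

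Next, I would verify that the set of $S^*$-fixed probability measures coincides with $\mc M(X,G)$. For the inclusion $\mc M(X,G)\subseteq\mathrm{Fix}(S^*)$, invariance of $\mu$ combined with Fubini yields
\[
\int_X S_n^{\mc F}f\,d\mu=\mu(f)
\]
for every $n$ and $f\in C(X)$; dominated convergence applied to the pointwise limit $S_n^{\mc F}f(x)\to Sf(x)$ (which is uniformly bounded by $\|f\|_\infty$) then gives $(S^*\mu)(f)=\mu(f)$. For the reverse inclusion, I would observe that for any $\mu\in\mc M(X)$ the identity $(S^*\mu)(f)=\int_X\mu_x(f)\,d\mu(x)$ presents $S^*\mu$ as a barycentre of invariant measures, hence itself invariant; so any $\mu$ fixed by $S^*$ automatically lies in $\mc M(X,G)$.

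Finally, applying \cite[Theorem~3]{Lloyd1963} to $S$ identifies $\mathrm{Fix}(S^*)=\mc M(X,G)$ as a Bauer simplex. I do not expect any significant obstacle beyond tracking the right F\o lner case, where the invariance $\mu_x\in\mc M(X,G)$ is not a priori obvious but is supplied by the argument already recorded in the proof of Theorem~\ref{the:Weyl-weak-mean}; once this and Seever's identity from Theorem~\ref{thm:rel_bounded_lin_op}(a) are available, the appeal to Lloyd's theorem is immediate.
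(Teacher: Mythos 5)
Your proposal is correct and follows essentially the same route as the paper, which likewise derives the corollary by combining the positive contractive projection $S$ from Theorem~\ref{thm:rel_bounded_lin_op} with \cite[Theorem~3]{Lloyd1963}; you merely make explicit the identification $\mathrm{Fix}(S^*)\cap\mc M(X)=\mc M(X,G)$, which the paper leaves implicit.
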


\section{$\mc F$-weak mean equicontinuity versus weak mean equicontinuity}\label{sec: example}

In Example~\ref{exmp: F-weak on interval}, we already saw that in general, $\mc F$-weak mean 
equicontinuity for some F\o lner sequence $\mc F$ does not imply weak mean equicontinuity.
As much as this phenomenon somehow appears to be related to the \emph{size} of the acting group, it is not merely a consequence of the lack of local compactness.
In fact, in this section, we give a complete description of the F\o lner averages of an action of the Lamplighter group---which, in contrast to $\textrm{Hom}_+(\I)$, is locally compact.
Among others, we will see that even if a system is $\mc F$-weakly mean equicontinuous and every point is generic for an \emph{ergodic} measure along $\mc F$, the system does not need to be weakly mean equicontinuous.

	Consider $\hat{X}=(\mathbb{Z}\cup\{ \infty\})\times \{1\}$ and $\check{X}=(\mathbb{Z}\cup \{\infty\})\times \{0\}$ and
	write $\hat{s}=(s,1)$ and $\check{s}=(s,0)$ for $s\in \mathbb{Z}\cup\{\infty\}$. 
	We equip $\hat{X}$ with some metric $\hat{d}$ 
	which induces the topology of the one-point compactification (of $\Z$) on $\hat X$
	and metrize $\check{X}$ similarly (by $\check{d}$).
	Finally, we equip $X=\check{X}\cup \hat{X}$ with the metric $d$ that restricts to $\check{d}$ and $\hat{d}$ on $\check{X}$ and $\hat{X}$, respectively, and which satisfies $d(x,y)=1$ for $x\in \hat{X}$ and $y\in \check{X}$.

	Let $f\colon X\to X$ be the transposition $f=(\hat{0},\check{0})$, that is, $f$ is the identity on $X\setminus \{\check{0},\hat{0}\}$ and satisfies $f(\hat{0})=\check{0}$, $f(\check{0})=\hat{0}$. 
	Let $\sigma\colon X\to X$ be the shift map with $\sigma(\check {s})=\widecheck{(s-1)}$ and $\sigma(\hat{s})=\widehat{(s-1)}$, where $\infty - 1=\infty$. 
	Then the countable discrete group $G=\langle f,\sigma\rangle$ is (isomorphic to) the Lamplighter group and acts on $X$ by homeomorphisms. 
	Defining $f_n=(\hat{n},\check{n})$, observe that 
	$f_n=\sigma^{-n}\circ f \circ \sigma^n$ and
	$f_b\circ \sigma ^a=\sigma^a\circ\sigma^{-b-a}\circ f\circ \sigma^{a+b}=\sigma^a\circ f_{a+b}$. 
	Further, $f_n^2=\operatorname{Id}$ and 
	\begin{align}\label{eq: commuting}f_n \circ f_m=f_m\circ f_n\qquad \text{for all } n,m\in \mathbb{Z}.
	\end{align}

  We denote by $\ZF$ the collection of all finite subsets of $\Z$;
  elements of $\ZF$ are denoted by bold face lower case letters.
  Given $\mathbf{b}=\{b_1,\ldots ,b_k\}\in \ZF$, we write
  $f_{\mathbf b}=f_{b_k}\circ \dots \circ f_{b_1}$.
  Note that due to \eqref{eq: commuting}, $f_{\bb}$ is well-defined.
 For the convenience of the reader, we recall the following basic fact about $G$. For more information regarding the Lamplighter group, see for instance \cite[Section~4]{BonanomeDeanDean2018}.

\begin{lem}
	The mapping $\mathbb{Z}\times [\Z]^{<\omega} \to G$ defined by $(a,\bb)\mapsto \sigma^a\circ f_{\bb}$ is bijective. 
\end{lem}
\begin{proof}
	From $f^2=\operatorname{Id}$ we observe that every element $g\in G$ is of the form 
	\[g=\sigma^{a_k}\circ f \circ \sigma^{a_{k-1}}\circ f \circ \dots \circ f\circ \sigma^{a_0},\] 
where $a_0,\dots, a_k \in \mathbb{Z}$. We write 
$b_{i}=\sum_{j=0}^{i-1} a_j$ for $i=1,\dots, k+1$. 
With $a=b_{k+1}$ and $\bb=\{b_i\colon \, i=1, \dots, k\}$, a straightforward computation shows 
$g=\sigma^{a}\circ f_{\bb}$. This yields surjectivity of the mapping under consideration. 

	To show injectivity, consider distinct $(a,\bb),(a',\bb')\in \mathbb{Z}\times [\Z]^{<\omega}$. Write $g=\sigma^a\circ f_{\bb}$ and $g'=\sigma^{a'} \circ f_{\bb'}$. 
	If $a\neq a'$, clearly $g\neq g'$. 
	If $a=a'$, then $\bb\neq \bb'$. From $\bb=\{b\in \mathbb{Z}\colon g(\hat{b})\in \check{X}\}$ and a similar statement about $\bb'$ and $g'$, we obtain $g\neq g'$ from $\bb\neq \bb'$. 
\end{proof}

\subsection{Left F\o lner sequences}

We next discuss certain left F\o lner sequences $(F_n)$ in $G$, where for each $n\in \N$, we will obtain $F_n$ as a product of elements of a 
F\o lner sequence in $\Z$ and subsets of
\[
	 I_n= \mc P([-2^{n},2^{n}]),
\]
that is, subsets of the power set of $[-2^{n},2^{n}]$.
Here and in the following, unless stated otherwise, given $m,n\in \Z_{\geq 0}$, we denote by $[-m,n]$ the respective interval in $\Z$, that is,
$[-m,n]=\{-m,-m+1,\ldots,n\}$.
We will make use of the following auxiliary statement whose proof we include for the convenience of the reader.

\begin{lem}\label{lem: construction of Vn}
 Let $(r_\ell)_{\ell\in\Z}$ be a real-valued sequence with $0\leq r_{\ell}\leq 1$.
 Then for each $n\in \N$, there is a
 collection $V_n\ssq \mc P([-n, n])$ such that
\begin{align}\label{eq: right fraction in Vn}
 \left|\frac{|\{\vv \in V_n\:\ell\in \vv\}|}{|V_n|}-r_\ell\right|\leq 1/n
 \qquad (\ell=-n,\ldots,n).
\end{align}
\end{lem}
\begin{proof}

We construct $(V_n)_{n\in \N}$ recursively
in a way which ensures $|V_{n+1}|>|V_n|\geq n$
and $|r_\ell^{(n)}-r_\ell|\leq 1/|V_n |$ for $\ell=-n,\ldots,n$,
where $r_\ell^{(n)}={|\{\vv \in V_n\:\ell\in \vv\}|}/{|V_n|}$.

The case $n=1$ is trivial.
Assuming we have already constructed $V_n$, we define $\ww=\{\ell\in[-n,n]\colon r_{\ell}^{(n)}<r_{\ell}\}$.

We now enlarge some elements of $V_n\cup \{\ww\}$ by adding $-(n+1)$.
Specifically, we pick $U\subseteq V_n$ such that $\ww\notin U$ and
$r_{-(n+1)}-1/|V_n|\leq |U|/|V_n|\leq r_{-(n+1)}$.
We set
\[
    V_n'=\{\uu\cup\{-n-1\}\colon \uu\in U\cup\{\ww\}\}\cup V_n\setminus U. 
\]
Similarly, we enlarge some sets of $V_n'$ by adding $n+1$ to obtain $V_{n+1}\in\mc P([-n-1, n+1])$ such that
$|V_{n+1}|>|V_n|$ and $|r_\ell^{(n+1)}-r_\ell|\leq 1/|V_{n+1}|$
holds for $\ell=-(n+1),\ldots,n+1$.
The statement follows.
\end{proof}

Given a real-valued sequence $r=(r_\ell)_{\ell\in\Z}$ with $0\leq r_{\ell}\leq 1$,
we choose $V_n$ as in Lemma~\ref{lem: construction of Vn} and set 
$B_n=B_n^r=\big\{\bb \in I_n\colon \bb\cap[-n,n]\in V_n\big\}$ as well as
\begin{align}\label{eq: defn Folner sequence in G}
	F_n=F_n^{r}=\{\sigma^a\circ f_{\mathbf b} \in G\:  a \in  [-2^n,2^n],\, \mathbf b \in B_n \}.
\end{align}
Note that due to \eqref{eq: right fraction in Vn}, for $\ell=-n,\ldots,n$, we have 
\begin{align}\label{eq: approximate ratio}
 \left|\frac{|\{\mathbf b \in B_n\:  \ell\in \mathbf b \}|}{|B_n|} - r_{\ell}\right|\leq 1/n.
\end{align}

\begin{lem}\label{lem: Fn is Folner}
	We have that $(F_n)_{n\in \mathbb{N}}$ with $F_n$ as in \eqref{eq: defn Folner sequence in G} is left F\o lner in $G$. 
\end{lem}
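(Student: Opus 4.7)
The plan is to verify the left F\o lner condition $|gF_n \triangle F_n|/|F_n| \to 0$ for each generator of $G = \langle \sigma, f\rangle$, then extend to arbitrary $g \in G$ via the telescoping estimate $|g_1 g_2 F_n \triangle F_n| \leq |g_1 F_n \triangle F_n| + |g_2 F_n \triangle F_n|$, valid by left-invariance of the counting measure (and similarly, $|g^{-1} F_n \triangle F_n| = |gF_n \triangle F_n|$, so inverses are automatic). Since $a \in [-2^n, 2^n]$ and $\mathbf{b} \in B_n$ parameterize $F_n$ independently via \eqref{eq: defn Folner sequence in G}, I first record $|F_n| = (2 \cdot 2^n + 1) \cdot |B_n|$.

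For the generator $\sigma$, the formula $\sigma \cdot (\sigma^a \circ f_{\mathbf{b}}) = \sigma^{a+1} \circ f_{\mathbf{b}}$ shows that $\sigma F_n$ differs from $F_n$ only in the boundary $a$-fibres corresponding to $a = -2^n$ (removed) and $a = 2^n + 1$ (added), yielding $|\sigma F_n \triangle F_n| = 2 |B_n|$ and ratio $2/(2 \cdot 2^n + 1) \to 0$.

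For the generator $f = f_0$, the identity $f_c \circ \sigma^a = \sigma^a \circ f_{a+c}$ stated before the lemma, applied with $c = 0$, yields
\[
  f \cdot (\sigma^a \circ f_{\mathbf{b}}) = \sigma^a \circ f_{\mathbf{b}'},
\]
where $\mathbf{b}'$ is obtained from $\mathbf{b} = (b_1,\ldots,b_k)$ by either adjoining $a$ (if $a \notin \{b_1,\ldots,b_k\}$) or removing it (otherwise), and then reordering increasingly. The key observation is that the constraint defining $B_n$ concerns only the restriction of $\mathbf{b}$ to the narrow middle range $[-2n, 2n]$. Hence for $a \in [-2^n, 2^n] \setminus [-2n, 2n]$, toggling $a$ does not affect this restriction, so $\mathbf{b}' \in B_n$ automatically and the element stays in $F_n$. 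Only elements with $a \in [-2n, 2n]$---at most $(4n+1)|B_n|$ of them---can leave $F_n$, which gives $|fF_n \triangle F_n| \leq 2(4n+1)|B_n|$ and hence ratio of order $n/2^n$, tending to zero.

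The main---indeed the only---conceptual point is the scale separation $2n \ll 2^n$ built into the construction of $B_n$: the generator $f$ can only alter the $(4n+1)$ ``middle'' bits on which the $B_n$-constraint lives, and this is negligible compared to the full range $[-2^n,2^n]$ over which $a$ varies, while the generator $\sigma$ produces only a boundary effect in the $a$-coordinate. I do not anticipate any significant obstacle; both estimates reduce to elementary boundary counts using the normal form description of elements of $G$ established in the two preceding lemmas.
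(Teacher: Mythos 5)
Your proof is correct. The underlying count is the same as in the paper's proof---an element $\sigma^a\circ f_{\mathbf b}$ of $F_n$ can only be pushed out of $F_n$ when the relevant shifted indices land in the small exceptional region $[-2n,2n]$ or outside $[-2^n,2^n]$, and the proportion of such $a$ is $O(n/2^n)$---but your architecture differs: the paper takes an arbitrary $g=\sigma^c\circ f_{d_r}\circ\dots\circ f_{d_1}$ in normal form and bounds $|(g\circ F_n)\setminus F_n|/|F_n|$ directly by a sum over the finitely many translates $K=\{-c,-d_1,\dots,-d_r\}$ of the bad set of $a$'s, whereas you verify the condition only for the two generators $\sigma$ and $f_0$ and then invoke the subadditivity $|g_1g_2F_n\triangle F_n|\le|g_1F_n\triangle F_n|+|g_2F_n\triangle F_n|$ together with $|g^{-1}F_n\triangle F_n|=|gF_n\triangle F_n|$. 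This reduction is legitimate (the word length of a fixed $g$ does not grow with $n$, so the finite sum of vanishing ratios vanishes), and it buys modularity: each generator requires only a one-line boundary count, with the $\sigma$-estimate $2|B_n|/|F_n|$ and the $f_0$-estimate $2(4n+1)|B_n|/|F_n|$ both relying on the normal-form bijection to compute $|F_n|=(2\cdot 2^n+1)|B_n|$. The paper's direct computation, by contrast, exhibits the exceptional set explicitly and uniformly for every $g$ in one pass, at the cost of slightly heavier bookkeeping. Your identification of the one conceptual ingredient---that the $B_n$-constraint lives only on the window $[-2n,2n]$, which is negligible inside $[-2^n,2^n]$---is exactly the point the paper's estimate also turns on.
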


\begin{proof}
    Consider $g=\sigma^c\circ f_{\dd}\in G$ so that 
    $g\circ \sigma^a\circ f_{\bb}=\sigma^{c+a}\circ f_{\dd+a}\circ f_{\bb}$,
    where we write $\dd+a=\{d+a\colon d\in\dd\}$.
	Further, observe that 
    \begin{align*}
	    (g\circ F_n)\setminus F_n
	    &\ssq g\circ \bigcup_{t\in\dd}\{\sigma^a\circ f_{\mathbf b}\in F_n\: t+a\notin [-{2^{n}},{2^{n}}]\setminus [-n,n]\}\\
	    &\phantom{\ssq } \cup  g\circ 
	    \{\sigma^a\circ f_{\mathbf b}\in F_n\: c+a\notin [-2^n,2^n]\}\\
	    &\ssq g\circ \bigcup_{k\in\dd\cup\{c\}}\{\sigma^a\circ f_{\mathbf b}\in F_n\: a\notin ([-{2^{n}},{2^{n}}]\setminus [-n,n])-k\}.
    \end{align*}
	Therefore,
    \begin{align*}
    \frac{|(g\circ F_n)\setminus F_n|}{|F_n|}	
        &\leq \sum_{k\in \dd\cup\{c\}} \frac{|B_n|\cdot |[-2^n,2^n]\setminus ([-{2^{n}},{2^{n}}]\setminus [-n,n])-k|}{|B_n|\cdot|[-2^n,2^n]|}\\
        &= \sum_{k\in \dd\cup\{c\}} \frac{|[-2^n,2^n]\setminus [-{2^{n}},{2^{n}}]-k|}{|[-2^n,2^n]|}+
        \frac{|[-2^n,2^n]\cap [-n,n]-k|}{|[-2^n,2^n]|}
    \end{align*}
    which tends to $0$ as $n\to \infty$ since $[-2^n,2^n]$ defines a F\o lner sequence in $\Z$.
\end{proof}
\begin{rem}\label{rem: sequence of sequence of sequence}
    For furture reference, observe that a straightforward adaption of the above proof shows that for any sequence $\big(r^{(n)}\big)_{n\in\N}$ of sequences $r^{(n)}$ with values in the real interval $[0,1]$, we have that $F_n=F_n^{r^{(n)}}$ defines a (left) F\o lner sequence $(F_n)_{n\in\N}$. 
\end{rem}

\begin{rem}
 Some authors prefer to understand F\o lner sequences to be monotone and exhausting.
  Clearly, one can always obtain a monotone and exhausting F\o lner sequence $(F_n')$ from $(F_n^r)$ with the same asymptotics.
 Indeed, given a strictly increasing and sufficiently sparse sequence $(n_k)$ in $\N$, we may just set $F'_k=F^r_{n_{k+1}}\cup \{\sigma^a \circ f_{\mathbf b}\:a \in [-2^{n_k},2^{n_k}],\, {\mathbf b}\in I_{n_k}\}$.
\end{rem}

\begin{lem}\label{lem:non-ergodic-measures-I}
    Let $r=(r_\ell)_{\ell\in\Z}$ be a real-valued sequence with $0\leq r_{\ell}\leq 1$. Consider a (left) F\o lner sequence  $(F_n)$ given by \eqref{eq: defn Folner sequence in G}. 
    Then for $x=(b,i)\in X$, we have the following convergence (with respect to the weak*-topology) as $n\to \infty$ 
\begin{align}\label{eq: convergence of empirical measures in the example}
    \frac{1}{|F_n|} \cdot\sum_{g\in F_n} \delta_{gx}\rightarrow
\begin{cases}
(1-r_{b})\cdot \delta_{\hat \infty}+ r_{b}\cdot\delta_{\check \infty} & \text{ if } x\in \hat{X},\\
r_{b}\cdot \delta_{\hat \infty}+ (1-r_{b})\cdot \delta_{\check \infty}
& \text{ if } x\in \check{X}.
\end{cases}
\end{align}
\end{lem}
\begin{proof}
    W.l.o.g.\ we assume $x\in \hat{X}$, i.e.\ that $x=\hat{b}$. 
    With $A_n=[-2^n,2^n]$, we have
	\begin{align*}
    \frac{1}{|F_n|} \sum_{g\in F_n} \delta_{gx}
    &= \frac{1}{|A_n|\cdot |B_n|}\sum_{\mathbf b\in B_n}\, \sum_{a\in A_n}\delta_{\sigma^a\circ f_{\mathbf b}x}\\
    &= \frac{1}{|A_n|\cdot |B_n|}\sum_{\substack{a\in A_n\\ \mathbf b\in B_n,\, b\notin \mathbf b}} \delta_{\sigma^a\hat{b}}+
	\frac{1}{|A_n|\cdot |B_n|}\sum_{\substack{a\in A_n\\ \mathbf b \in B_n,\, b\in \mathbf b}} \delta_{\sigma^a\check{b}},
    \end{align*}
	for each $n\in \N$. 
	Observe that $(\hat X, \langle\sigma\rangle)$ and $(\check X, \langle\sigma\rangle)$, respectively, are uniquely ergodic so that
	$\hat b$ ($\check b$) is $\delta_{\hat{\infty}}$-generic ($\delta_{\check{\infty}}$-generic) along $(A_n)_{n\in \N}$. 
	Hence, $\frac{1}{|A_n|}\sum_{a\in A_n} \delta_{\sigma^a\hat b}\to\delta_{\hat{\infty}}$ and $\frac{1}{|A_n|}\sum_{a\in A_n} \delta_{\sigma^a\check b}\to\delta_{\check{\infty}}$. 
    Now, \eqref{eq: convergence of empirical measures in the example} follows from
	$|\{\mathbf b \in B_n\: b  \in \mathbf b\}|/|B_n|\to r_{b}$, see \eqref{eq: approximate ratio}.
\end{proof}
\begin{rem}
   Note that by considering F\o lner sequences $(F_n)_{n\in\N}$ as in Remark~\ref{rem: sequence of sequence of sequence},
    we can enforce essentially any kind of convergence or divergence of the measures $\frac{1}{|F_n|} \sum_{g\in F_n} \delta_{gx}$.
   This shows the first half of Theorem~\ref{thm: example}.
However, there is one inherent and unavoidable symmetry.
 As the computation in the above proof shows, along \emph{each} F\o lner sequence in $G$, $\frac{1}{|F_n|} \sum_{g\in F_n} \delta_{g(b,1)}$ accumulates at $\lambda \delta_{\hat \infty}+(1-\lambda) \delta_{\check \infty}$ if and only if
 $\frac{1}{|F_n|} \sum_{g\in F_n} \delta_{g(b,0)}$ accumulates at $(1-\lambda) \delta_{\hat \infty}+\lambda \delta_{\check \infty}$.
\end{rem}

\subsection{Right F\o lner sequences}

	Observe that unless $r_\ell=1/2$ for all $\ell\in\Z$, $(F_n^r)$ as defined in \eqref{eq: defn Folner sequence in G} is not right F\o lner.     
    Indeed, given $b\in \Z$, we have
	\[\liminf_{n\to \infty}|(F_n^r \circ f_b )\triangle F_n^r|/|F_n^r| \geq |2r_b-1|.\] 

    Note that, a priori, for a right F\o lner sequence $(F_n)$, the average $1/|F_n|\cdot\sum_{g\in F_n} \delta_{gx}$ does not need to converge to an invariant measure. Yet, in our example, we obtain the following. 

\begin{lem}\label{lem:non-ergodic-measures-II}
    Assume $(F_n)$ is a right F\o lner sequence. 
    Then
	\begin{itemize}
	    \item[(i)] For each $b\in\Z$,
            \begin{align*}
            \lim_{n\to\infty} \frac{|\{\sigma^a\circ f_{\mathbf b}\in F_n\: b\in \mathbf b \}|}{|F_n|}=
            \lim_{n\to\infty} \frac{|\{\sigma^a\circ f_{\mathbf b}\in F_n\: b\notin \mathbf b \}|}{|F_n|}=\frac{1}{2}.
            \end{align*}
        \item[(ii)] 
        For each $x\in X\setminus \{\hat \infty,\check \infty\}$, we have $1/|F_n|\cdot\sum_{g\in F_n} \delta_{gx} \to 1/2\cdot \delta_{\hat \infty}+ 1/2\cdot \delta_{\check \infty}$ .
	\end{itemize}
\end{lem}
\begin{proof}
Given any sequence $(F_n)$ of subsets in $G$, for each $b \in \Z$ and $n\in \N$, we have
	\begin{align*}
	 |F_n\circ f_b\, \setminus\, F_n|&\geq \big| |\{\sigma^a\circ f_{\mathbf b}\in F_n\: b\in \mathbf b \}|-|\{\sigma^a\circ f_{\mathbf b}\in F_n\: b\notin \mathbf b \}|\big |\\
	 &=
	 \big||F_n|- 2|\{\sigma^a\circ f_{\mathbf b}\in F_n\: b\in \mathbf b\}|\big|.
	\end{align*}
	Hence, if $(F_n)$ is right F\o lner, i.e. 
	$|F_n\circ f_b\, \setminus\, F_n|/|F_n|\to 0$,
	we obtain (i).
	
    Towards (ii), assume w.l.o.g.\ that $x\in \hat{X}$, i.e.\ that $x=\hat{b}$ for some $b\in \mathbb{Z}$. We have 
   \begin{align}\label{eq: folner average along right folner sequence}
     \frac{1}{|F_n|}\cdot\sum_{g\in F_n} \delta_{gx} = \frac{1}{|F_n|}\sum_{\substack{\sigma^a\circ f_{\mathbf b} \in F_n\\ b\notin \mathbf b}} \delta_{\sigma^a\hat{b}}+
	\frac{1}{|F_n|}\sum_{\substack{\sigma^a\circ f_{\mathbf b} \in F_n\\ b\in \mathbf b}} \delta_{\sigma^a\check{b}},
   \end{align}
	for each $n\in \N$.
    Observe that since $(F_n)$ is (right) F\o lner, we have that for each $\eps>0$ and $k_0\in \N$, there is $n_0$ such that for all $n\geq n_0$,
    \begin{align}\label{eq: Folner doesnt localise}
     |\{\sigma^a\circ f_{\mathbf b}\in F_n\: |a|<k_0\}|/|F_n|<\eps.
    \end{align}
    
   Now, given some continuous function $h$ on $X$ and $\eps>0$ (we may assume $\eps<1/4$), choose
   $k_0\in \N$ such that $|h(\hat \infty)-h(\widehat{b-a})|<\eps$ and $|h(\check \infty)-h(\widecheck{b-a})|<\eps$ for all $a\in \Z$ with $|a|\geq k_0$ and let $n_0$ be such that \eqref{eq: Folner doesnt localise} holds for all $n\geq n_0$.
   Due to (i), we may also assume without loss of generality that 
   $\big |1/2-|\{\sigma^a\circ f_{\mathbf b}\in F_n\: b\in \mathbf b \}|/|F_n|\big |<\eps $ and 
   $\big |1/2-|\{\sigma^a\circ f_{\mathbf b}\in F_n\: b\notin \mathbf b \}|/|F_n|\big |<\eps $ for all $n\geq n_0$.
   With \eqref{eq: folner average along right folner sequence}, we obtain for all $n\geq n_0$,
   \begin{align*}
    &|1/2\cdot \delta_{\hat \infty}(h) +1/2 \cdot \delta_{\check \infty}(h)-1/|F_n|\cdot\sum_{g\in F_n} \delta_{gx}(h)|\\
    &=\bigg|1/2\cdot  h(\hat \infty)+1/2\cdot h(\check \infty)-
    \frac{1}{|F_n|}\sum_{\substack{\sigma^a\circ f_{\mathbf b} \in F_n\\ b\notin \mathbf b}} h(\widehat{b-a})-
	\frac{1}{|F_n|}\sum_{\substack{\sigma^a\circ f_{\mathbf b} \in F_n\\ b\in \mathbf b}} h(\widecheck{b-a})\bigg|\\
	&\leq \bigg|1/2\cdot  h(\hat \infty)-
    \frac{1}{|F_n|}\sum_{\substack{\sigma^a\circ f_{\mathbf b} \in F_n\\ b\notin \mathbf b}} h(\widehat{b-a})\bigg|
    +\bigg|1/2\cdot h(\check \infty)-
	\frac{1}{|F_n|}\sum_{\substack{\sigma^a\circ f_{\mathbf b} \in F_n\\ b\in \mathbf b}} h(\widecheck{b-a})\bigg|\\
	&\leq \bigg|1/2\cdot  h(\hat \infty)-
	\frac{1}{|F_n|}\sum_{\substack{\sigma^a\circ f_{\mathbf b} \in F_n\\ b\notin \mathbf b}}h(\hat \infty)\bigg|
	+
	\bigg|\frac{1}{|F_n|}\sum_{\substack{\sigma^a\circ f_{\mathbf b} \in F_n\\ b\notin \mathbf b}}h(\hat \infty)- h(\widehat{b-a})\bigg |
    +\bigg|\ldots \bigg|\\
    &\leq\eps \cdot( h(\hat \infty)+h(\check \infty)+2).
   \end{align*}
As $\eps$ and $h$ were arbitrary, this finishes the proof.	
\end{proof}

\begin{rem}
   For the example under consideration, Lemma~\ref{lem:non-ergodic-measures-II} rules out continuity of the map $x\mapsto \mu_x$ if $\mu_x$ is obtained through averaging along a right F\o lner sequence. 
   This shows the second half of Theorem~\ref{thm: example}. 
\end{rem}

\appendix
\section{Ergodic theory with thin F\o lner sequences}\label{appendix: thin folner}
	The goal of this appendix is to convince the reader that some relevant parts of the basic machinery from ergodic theory are also available when working with thin F\o lner sequences.
	The discussed statements are well-known for $\sigma$-compact locally compact
	topological groups and (thick) F\o lner sequences, see for example \cite{EinsiedlerWard2011}.
	While most parts of their proofs immediately carry over to the case of thin F\o lner sequences, we provide some discussion of the slight deviations from the standard situation (and whenever there is no such discussion, the proofs are literally the same in both cases).

    In all of the following, 
	we adopt the notation and the standing assumptions from the main body of this article.
\begin{lem}\label{lem:prep_MET}
 Given a system $(X,G)$, for $f\in C(X)$ and $g\in G$,
	we have $\|S_n(g.f-f)\|_\infty \to 0$. 
\end{lem}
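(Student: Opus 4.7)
The claim amounts to saying that the averaging operators $S_n$ asymptotically flatten out the action of $G$ on $C(X)$, and the Følner condition is precisely what is needed. Before splitting into cases according to the nature of the Følner sequence, I would first establish an equicontinuity property of the Koopman representation: for every $f\in C(X)$ and every $\eps>0$, there is an open neighbourhood $U$ of $e_G$ such that $\|u.f - f\|_\infty < \eps$ for all $u\in U$. This follows from joint continuity of $\alpha$ together with compactness of $X$ via a standard tube-lemma argument applied to the continuous map $(u,x)\mapsto f(ux)-f(x)$, which vanishes on $\{e_G\}\times X$.

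In the thick case, the argument is measure-theoretic. Writing $\tilde f(h) = f(hx)$, left-invariance of $\theta$ allows the substitution $h\mapsto g^{-1}h$, yielding
\[
    S_n(g.f)(x) = \frac{1}{\theta(F_n)}\int_{F_n}\!f(ghx)\,d\theta(h) = \frac{1}{\theta(F_n)}\int_{gF_n}\!f(hx)\,d\theta(h).
\]
Subtracting from $S_n f(x)$, taking absolute values, and passing to the symmetric difference gives the uniform bound
\[
    \|S_n(g.f-f)\|_\infty \;\leq\; \|f\|_\infty\cdot\frac{\theta(gF_n\triangle F_n)}{\theta(F_n)},
\]
and the right-hand side tends to $0$ by the left Følner property applied to the compact set $\{e_G,g\}$.

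The thin case is where the real work lies, since there is no invariant measure on $G$ to change variables against; the $V$-matching number is a combinatorial substitute for this invariance. Given $\eps>0$, pick $U$ as above. By the thin Følner property, for all sufficiently large $n$ we have $\mathfrak{m}_U(F_n,gF_n)/|F_n|\geq 1-\eps$, so there exist $M\subseteq F_n$ with $|M|\geq (1-\eps)|F_n|$ and an injection $\phi\colon M\to gF_n$ with $\phi(h)\in Uh$ for every $h\in M$. Writing $\phi(h)=u_h h$ with $u_h\in U$, the choice of $U$ gives $|f(\phi(h)x)-f(hx)|<\eps$ uniformly in $x$ and $h\in M$. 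Using $\sum_{h\in F_n}f(ghx)=\sum_{h'\in gF_n}f(h'x)$ and decomposing both sums into matched and unmatched parts,
\begin{align*}
    \sum_{h\in F_n}\!f(ghx)-\sum_{h\in F_n}\!f(hx)
    &= \sum_{h\in M}\bigl[f(\phi(h)x)-f(hx)\bigr] \\
    &\phantom{=}+ \sum_{h'\in gF_n\setminus \phi(M)}\!\!f(h'x) - \sum_{h\in F_n\setminus M}\!f(hx),
\end{align*}
where the first sum is bounded in modulus by $\eps|F_n|$ and the remaining two by $\eps\|f\|_\infty|F_n|$ each, using $|gF_n\setminus\phi(M)|=|F_n\setminus M|\leq \eps|F_n|$. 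Dividing by $|F_n|$ yields $\|S_n(g.f-f)\|_\infty\leq \eps(1+2\|f\|_\infty)$ for all large $n$, from which the claim follows. The main obstacle is thus purely in the thin setting, where one has to turn the abstract matching into an explicit $\sup$-norm estimate; the thick case reduces to the textbook computation.
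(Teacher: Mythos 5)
Your proof is correct and, in the thin case (the only one the paper proves in detail, in Appendix A), follows essentially the same route as the paper: choose a neighbourhood of $e_G$ on which the Koopman operators move $f$ by at most $\eps$ in $\sup$-norm, invoke the matching from the thin Følner condition, and split the averaged sum into a matched part controlled by $\eps$ and an unmatched part of proportion at most $\eps$ controlled by $2\|f\|_\infty$. Your explicit treatment of the thick case is the standard change-of-variables computation that the paper omits as routine, so nothing further is needed.
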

\begin{proof}
	Let $\eps>0$. 
	By continuity, 
	there exists an open neighbourhood $V$ of $e_G$ such that for all $g'\in V$, we have $\|g'. f-f\|_\infty <\eps/2$.	
By definition of the $V$-matching numbers, for $n\in \mathbb{N}$, there exists $E_n\subseteq F_n$ with $|E_n|=\mathfrak{m}_V(F_n ,g F_n)$ and a bijection $\phi_n\colon F_n\to gF_n$ such that for all $g'\in E_n$, we have $\phi_n(g'){g'}^{-1}\in V$ and hence,
	\[
	\|\phi_n(g').f-g'.f\|_\infty
	=\|{g'}^{-1}.(\phi_n(g').f)-f\|_\infty
	=\|\phi_n(g'){g'}^{-1}.f-f\|_\infty
	\leq \eps/2. 
	\]		
	
	Further, for large enough $n\in \mathbb{N}$, we have $\mathfrak{m}_V(gF_n ,F_n)/|F_n|\geq 1-\eps/(4\|f\|_\infty)$ and hence,
	\[
	\frac{|F_n\setminus E_n|}{|F_n|}=1-\frac{|E_n|}{|F_n|}<\frac{\eps}{4\|f\|_\infty}.
	\]	
	For such $n$, we compute
\begin{align*}
	\|S_n(g.f-f)\|_\infty
	&= \frac{1}{|F_n|} \Big\|\sum_{g'\in F_n} g'.(g.f)-g'.f\Big\|_\infty
	= \frac{1}{|F_n|} \Big\|\sum_{g'\in F_n} gg'.f-g'.f\Big\|_\infty\\
	&\leq \frac{1}{|F_n|} \sum_{g'\in E_n}  \|\phi_n(g').f-g'.f\|_\infty
	+\frac{1}{|F_n|} \sum_{g'\in F_n\setminus E_n} 2\|f\|_\infty\\
	&\leq \frac{|E_n|}{|F_n|}\frac{\eps}{2} +\frac{|F_n\setminus E_n|}{|F_n|}2\|f\|_\infty\leq \eps. 
\end{align*}
As $\eps>0$ was arbitrary, the statement follows. 
\end{proof}

\begin{cor}[Krylov-Bogolyubov]\label{cor: Krylov-Bogolyubov}
	Given a dynamical system $(X,G)$ and $x\in X$, then every limit point of $(S_n^*\delta_x)$ is contained in $\mathcal{M}(X,G)$. 
\end{cor}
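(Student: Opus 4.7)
The plan is to start with an arbitrary limit point $\mu$ of $(S_n^*\delta_x)_{n\in\N}$, so that $S_{n_k}^*\delta_x \to \mu$ in the weak*-topology along some subsequence $(n_k)$. Since $\mc M(X)$ is weak*-closed in the unit ball of $C(X)^*$, we have $\mu\in\mc M(X)$ automatically, and so the task reduces to showing that $\mu$ is $G$-invariant, that is, $g^*\mu=\mu$ for every $g\in G$.

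To verify invariance, I would test against continuous functions: by definition of $g^*$, we have $(g^*\mu)(f)-\mu(f)=\mu(g.f-f)$ for every $f\in C(X)$, so it suffices to show that $\mu(g.f-f)=0$ for every $f\in C(X)$ and every $g\in G$. The key identity to exploit is
\[
	(S_n^*\delta_x)(g.f-f)=\delta_x(S_n(g.f-f))=S_n(g.f-f)(x),
\]
which is immediate from the definition of $S_n^*$ as the adjoint of $S_n$. Combining this with weak* convergence along the subsequence $(n_k)$ gives
\[
	\mu(g.f-f)=\lim_{k\to\infty}(S_{n_k}^*\delta_x)(g.f-f)=\lim_{k\to\infty}S_{n_k}(g.f-f)(x).
\]

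The final step is simply to invoke Lemma \ref{lem:prep_MET}, which asserts precisely that $\|S_n(g.f-f)\|_\infty\to 0$ as $n\to\infty$. Together with the trivial estimate $|S_n(g.f-f)(x)|\leq \|S_n(g.f-f)\|_\infty$, this forces the above limit to vanish, hence $\mu(g.f)=\mu(f)$ for all $f\in C(X)$ and $g\in G$. This yields $g^*\mu=\mu$ and thus $\mu\in\mc M(X,G)$, completing the argument.

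There is no real obstacle here; the proof is a direct, standard application of Lemma \ref{lem:prep_MET}, where all the delicate work concerning thin (versus thick) F\o lner sequences was already absorbed. The only point worth emphasizing in the write-up is that, as compared with the locally compact thick case, no changes at all are required for this Krylov-Bogolyubov type argument once the convergence $\|S_n(g.f-f)\|_\infty \to 0$ has been established via $V$-matching numbers.
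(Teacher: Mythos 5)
Your proposal is correct and follows essentially the same route as the paper: both reduce invariance to showing $\mu(g.f-f)=0$ via weak*-convergence along a subsequence together with Lemma~\ref{lem:prep_MET}, the paper merely packaging this as a three-term triangle inequality rather than your direct limit computation. No gaps.
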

\begin{proof}
	Let $\mu$ be a limit point and $f\in C(X)$. 
	We have
	\begin{align*}
		|{g}^\ast\mu(f)-\mu(f)|
		&\leq |{g}^\ast\mu(f)-{g}^\ast S_n^*\delta_x(f)| + |{g}^\ast S_n^*\delta_x(f)-S_n^*\delta_x(f)| + |S_n^*\delta_x(f)-\mu(f)|\\
		&\leq |\mu(g.f)-S_n^*\delta_x(g.f)| + \|S_n(g.f-f)\|_\infty + |S_n^*\delta_x(f)-\mu(f)|
		\to 0. 
	\end{align*}
	Thus, ${g}^*\mu=\mu$ as claimed. 
\end{proof}
	Based on the above, one obtains
\begin{thm}\label{thm: uniform ergodic theorem}
Let $(X,G)$ be a dynamical system.
    The following are equivalent. 
	\begin{itemize}
	\item[(i)] The system $(X,G)$ is uniquely ergodic. 
	\item[(ii)] For every $f\in C(X)$, there is a constant $c$ such that for some F\o lner sequence $(F_n)_{n\in\N}$ and every $x\in X$, we have $(S_nf)(x)\to c$. 
	\end{itemize}
	
    Further, if one of the above conditions holds, then the convergence in (ii)
    is uniform in $x\in X$, independent of the specific F\o lner sequence $(F_n)_{n\in\N}$, and we have $c=\mu(f)$.
\end{thm}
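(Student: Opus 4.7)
The statement is the classical uniform ergodic theorem, and my plan is to transplant the usual proof to the thin F\o lner setting, relying on Lemma~\ref{lem:prep_MET} and Corollary~\ref{cor: Krylov-Bogolyubov} as the only ingredients specific to this situation.

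For (i) $\Rightarrow$ (ii) together with the uniformity and the identification $c=\mu(f)$, let $\mu$ denote the unique invariant measure. Fix any F\o lner sequence $(F_n)$, any $f\in C(X)$, and any $x\in X$. By weak*-compactness of $\mathcal{M}(X)$, every subsequence of $(S_n^*\delta_x)$ admits a convergent sub-subsequence, whose limit lies in $\mathcal{M}(X,G)$ by Corollary~\ref{cor: Krylov-Bogolyubov} and hence equals $\mu$. Consequently $S_n^*\delta_x\to\mu$, i.e.\ $S_nf(x)\to\mu(f)$, establishing (ii) with $c=\mu(f)$. For the uniformity, I would argue by contradiction: if $\sup_{x\in X}|S_nf(x)-\mu(f)|\not\to 0$, we find $\varepsilon>0$, indices $n_k\to\infty$, and points $x_{n_k}\in X$ with $|S_{n_k}f(x_{n_k})-\mu(f)|\geq \varepsilon$; after extracting a further sub-subsequence we may assume $S_{n_k}^*\delta_{x_{n_k}}\to\nu$ for some $\nu\in\mathcal{M}(X)$. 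The proof of Corollary~\ref{cor: Krylov-Bogolyubov} relies only on $\|S_n(g.f-f)\|_\infty\to 0$ (Lemma~\ref{lem:prep_MET}) and is insensitive to the base point, so the very same estimate yields $\nu\in\mathcal{M}(X,G)$, whence $\nu=\mu$ and $S_{n_k}f(x_{n_k})\to\mu(f)$, contradicting the choice of $x_{n_k}$.

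For (ii) $\Rightarrow$ (i), Corollary~\ref{cor: Krylov-Bogolyubov} ensures $\mathcal{M}(X,G)\neq\emptyset$. Given $\mu\in\mathcal{M}(X,G)$ and $f\in C(X)$, the $G$-invariance of $\mu$ together with Fubini's theorem gives $\int S_n f\,d\mu=\int f\,d\mu$ for every $n\in\N$. On the other hand, $\|S_nf\|_\infty\leq\|f\|_\infty$ and the pointwise convergence $S_nf(x)\to c$ force $\int S_nf\,d\mu\to c$ by dominated convergence. Hence $\mu(f)=c$ for every $\mu\in\mathcal{M}(X,G)$ and every $f\in C(X)$, so that $\mathcal{M}(X,G)$ consists of a single measure, and as a byproduct $c=\mu(f)$, independent of the F\o lner sequence used in (ii).

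The main obstacle is essentially only in the uniformity step of (i) $\Rightarrow$ (ii), where one has to justify that Corollary~\ref{cor: Krylov-Bogolyubov} still applies to weak*-limits of $S_{n_k}^*\delta_{x_{n_k}}$ with the base points $x_{n_k}$ varying with $k$. This is unproblematic precisely because the Krylov-Bogolyubov argument via Lemma~\ref{lem:prep_MET} controls $\|S_n(g.f-f)\|_\infty$ uniformly in $x$, which is exactly the reason why thin F\o lner sequences are tailored to behave as in the classical locally compact case.
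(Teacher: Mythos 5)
Your proof is correct and follows exactly the route the paper intends: the paper gives no explicit proof of this theorem (it merely says ``Based on the above, one obtains''), and your argument is the standard one built on Lemma~\ref{lem:prep_MET} and Corollary~\ref{cor: Krylov-Bogolyubov}, including the compactness/contradiction step for uniformity with varying base points, which works precisely because the Krylov--Bogolyubov estimate is a sup-norm bound independent of the base point. The only point worth double-checking, which you handle correctly, is that in (ii)~$\Rightarrow$~(i) the F\o lner sequence may depend on $f$, but invariance of $\mu$ still forces $\mu(f)=c$ for each $f$ separately.
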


\begin{prop}[{cf. \cite[Proposition~2.3]{FuhrmannGroegerLenz2022}}] \label{prob:right Foelner implies unique ergodicity}
    Let $(X,G)$ be a dynamical system.
    Suppose for each $f \in C(X)$ there is a right Følner sequence $(F_n)_{n\in\N}$ and a constant $c$ with
    \[
        \lim_{n\to\infty}\frac{1}{\theta_r(F_n)}\int\limits_{F_n}f(gx)\,d\theta_r(g)=c,
    \]
    for all $x\in X$.
    Then $(X,G)$ has a unique $G$-invariant measure $\mu$ and $\mu(f)=c$.
\end{prop}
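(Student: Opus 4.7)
The plan is to establish the identity $\mu(S_nf) = \mu(f)$ for every $n\in\N$ and every $G$-invariant measure $\mu$, where $S_n$ denotes the averaging operator along the given right F\o lner sequence $(F_n)$, and then to pass to the pointwise limit via dominated convergence.

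For the identity, expand
\[
\mu(S_nf) \;=\; \int_X \frac{1}{\theta_r(F_n)}\int_{F_n} f(gx)\,d\theta_r(g)\,d\mu(x).
\]
Fubini applies---trivially in the thin case, where the inner expression is a finite sum, and in the thick case by joint continuity of the action together with compactness of $F_n\times X$---so we may swap the order of integration. Using $g^\ast\mu=\mu$ and hence $\int_X f(gx)\,d\mu(x)=(g^\ast\mu)(f)=\mu(f)$, the inner integral collapses to the constant $\mu(f)$, whence $\mu(S_nf)=\mu(f)$. Note that this step does not actually invoke the right F\o lner property; it only uses invariance of $\mu$ under the left action and the existence of a right Haar measure.

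Since $|S_nf(x)|\leq \|f\|_\infty$ uniformly in $n$ and $x$, dominated convergence combined with the hypothesis $S_nf(x)\to c$ pointwise yields
\[
\mu(f) \;=\; \lim_{n\to\infty}\mu(S_nf) \;=\; \int_X c\,d\mu \;=\; c.
\]
Running this argument for every $f\in C(X)$ (the F\o lner sequence and constant $c=c_f$ may depend on $f$), any two invariant measures agree on $C(X)$ and hence, by the Riesz representation theorem, coincide on $\mc M(X)$. Existence of at least one invariant measure is guaranteed by amenability of $G$, for instance via Corollary~\ref{cor: Krylov-Bogolyubov} applied to any left F\o lner sequence. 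No serious obstacle is anticipated; the only mild technicality is verifying Fubini in the thick case, which is routine given the standing hypotheses on $G$ and the action.
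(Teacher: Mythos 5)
Your proof is correct and is essentially the standard argument: the paper itself does not spell out a proof of this proposition (it defers to \cite[Proposition~2.3]{FuhrmannGroegerLenz2022} and the general remark that the appendix proofs carry over from the classical setting), and your route---$\mu(S_nf)=\mu(f)$ by Fubini and invariance, then dominated convergence to get $\mu(f)=c$, plus Krylov--Bogolyubov via a \emph{left} F\o lner sequence for existence---is exactly the expected one. Your observation that existence cannot be extracted from the right F\o lner averages themselves (so one must fall back on amenability) is the right precaution, since the paper explicitly notes that limits of right F\o lner averages need not be invariant a priori.
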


	In the following, we denote by $C(\mu)$ the image of $C(X)$ under the canonical mapping into $L^2(\mu)$.\footnote{Note that this mapping is not necessarily injective and hence, not necessarily an embedding.}
	For $p=1,2$, denote by $I^p(\mu)$ the set of all $f\in L^p(\mu)$ with $g.f =f$ for all $g \in G$. 
	Denote by $P_{\mu}$ the projection onto the subspace $I^2(\mu)$ in $L^2(\mu)$. 	
	
	For the convenience of the reader, we briefly discuss the part of the proof of the next statement where the case of thin F\o lner sequences slightly deviates from the standard case.
	
\begin{thm}[Mean Ergodic Theorem]
	Let $(X,G)$ be a dynamical system and $\mu \in \mc M(X,G)$.
	For $f\in L^2(\mu)$, we have $S_n f \to P_{\mu}f$ in $L^2(\mu)$. 
\end{thm}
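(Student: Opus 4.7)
The plan is to adapt von Neumann's classical mean ergodic argument, with the thin-F{\o}lner-specific input entering only through Lemma~\ref{lem:prep_MET}. I would first verify that, since $\mu$ is $G$-invariant, the Koopman operators $f\mapsto g.f$ extend to unitaries on $L^2(\mu)$, and consequently each $S_n$ extends to a contraction on $L^2(\mu)$ via Jensen's inequality and Fubini:
\[
	\|S_n f\|_2^2 \leq \frac{1}{\theta(F_n)}\int_{F_n}\|g.f\|_2^2\,d\theta(g)=\|f\|_2^2.
\]
I would then invoke the orthogonal decomposition $L^2(\mu)=I^2(\mu)\oplus I^2(\mu)^{\perp}$ and treat the two summands separately. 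On $I^2(\mu)$, the identity $S_n f = f = P_\mu f$ is immediate, so the task reduces to showing $S_n f\to 0$ in $L^2(\mu)$ for each $f\in I^2(\mu)^{\perp}$.

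For this, the strategy is the standard two-step approach: first handle a dense subspace of coboundaries, then pass to the closure using contractivity. Concretely, set
\[
	V=\mathrm{span}\{g.h-h \: g\in G,\, h\in C(X)\}.
\]
The inclusion $V\subseteq I^2(\mu)^{\perp}$ is clear. For the reverse inclusion (in the closure), I would argue that any $\varphi\in L^2(\mu)$ orthogonal to $V$ must satisfy $\langle g^{-1}.\varphi-\varphi,h\rangle=0$ for every $g\in G$ and every $h\in C(X)$; density of $C(X)$ in $L^2(\mu)$ then forces $g.\varphi=\varphi$ for all $g$, so $\varphi\in I^2(\mu)$ and hence $\overline{V}=I^2(\mu)^{\perp}$.

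On elements of $V$, Lemma~\ref{lem:prep_MET} yields $\|S_n v\|_\infty\to 0$, so a fortiori $\|S_n v\|_2\to 0$, and linearity gives $S_n v \to 0$ in $L^2(\mu)$ for every $v\in V$. For arbitrary $f\in I^2(\mu)^{\perp}$ and $\eps>0$, I would pick $v\in V$ with $\|f-v\|_2<\eps$ and use contractivity to obtain $\|S_n f\|_2\leq \eps+\|S_n v\|_2$, from which $\limsup_n\|S_n f\|_2\leq\eps$. Combining the two summands via $f = P_\mu f + (f - P_\mu f)$ then gives the claim.

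The main technical point to navigate is the density statement $\overline{V}=I^2(\mu)^{\perp}$. I do not expect this to pose a serious obstacle in the thin-F{\o}lner setting, since the argument is purely Hilbert-space/measure-theoretic---using only unitarity of the Koopman representation (guaranteed by $G$-invariance of $\mu$) and density of $C(X)$ in $L^2(\mu)$. All thin-F{\o}lner-specific input is packaged into Lemma~\ref{lem:prep_MET}, which governs the coboundary estimate; once this lemma is granted, the mean ergodic theorem drops out of the standard decomposition argument.
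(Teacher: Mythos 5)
Your proof is correct and follows essentially the same route as the paper: the same decomposition of $L^2(\mu)$ into the invariant functions and the closed span of coboundaries $g.h-h$ with $h\in C(X)$, with Lemma~\ref{lem:prep_MET} killing $S_n$ on the coboundaries and contractivity of $S_n$ handling the passage to the closure. You merely spell out the details (unitarity of the Koopman operators, the duality argument for $\overline{V}=I^2(\mu)^{\perp}$) that the paper leaves as ``one can show''.
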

\begin{proof}
	Define $A=\{g.f-f\: f\in C(\mu), g\in G\}$. 
	Since $C(\mu)$ is dense in $L^2(\mu)$, one can show that $L^2(\mu)=I^2(\mu)\oplus \overline{A}$, where $\overline{A}$ denotes the closure of $A$. 
	Now, recall that $\|f\|_{2}\leq \|f\|_\infty$ for all $f\in C(\mu)$. 
	Thus, with Lemma~\ref{lem:prep_MET}, one can show that for $f\in \overline{A}$, we have $\|S_n f\|_2\to 0$.
	Since $S_n f=f$ for all $f\in I^2(\mu)$, we have $S_n f\to P_{\mu} f$
	for each $f\in L^2(\mu)$. 	
\end{proof}

\begin{cor}[$L^1$-Mean Ergodic Theorem]
Let $(X,G)$ be a dynamical system and $\mu \in \mc M(X,G)$.
	For $f\in L^1(\mu)$, we have that $S_n f$ converges in $L^1(\mu)$ to an element in $I^1(\mu)$. 
	In particular, if $\mu$ is ergodic, then $S_n f\to \mu(f)$.
\end{cor}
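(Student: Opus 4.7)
The plan is to deduce the $L^1$-convergence from the $L^2$ mean ergodic theorem proved immediately before, using an approximation argument, and then to handle the ergodic case via the standard observation that $I^1(\mu)$ reduces to constants.

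First I would record the fact that $S_n$ extends to a contraction on $L^1(\mu)$. Indeed, for $f\in C(\mu)\subseteq L^1(\mu)$ and $g\in G$, the $G$-invariance of $\mu$ gives $\|g.f\|_{1}=\|f\|_{1}$, so by the triangle inequality applied to the defining average of $S_n$ (counting or Haar measure, as appropriate) one obtains $\|S_n f\|_{1}\leq \|f\|_{1}$. By continuity this extends to all of $L^1(\mu)$, and the same argument gives $g.S_n f=S_n(g.f)$ almost everywhere for every $g\in G$.

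Next, given $f\in L^1(\mu)$ and $\varepsilon>0$, I would pick $\tilde f\in L^2(\mu)$ with $\|f-\tilde f\|_1<\varepsilon/3$ (using density of $L^2$ in $L^1$ for a probability measure, together with density of $C(\mu)$ in $L^2(\mu)$ if needed). By the $L^2$ mean ergodic theorem, $S_n\tilde f\to P_\mu\tilde f$ in $L^2(\mu)$, hence in $L^1(\mu)$ since $\|\cdot\|_1\leq\|\cdot\|_2$ for probability measures. Thus $(S_n\tilde f)_n$ is Cauchy in $L^1(\mu)$, and a standard $3\varepsilon$ split
\[
\|S_nf-S_mf\|_1\leq \|S_n(f-\tilde f)\|_1+\|S_n\tilde f-S_m\tilde f\|_1+\|S_m(\tilde f-f)\|_1
\]
together with the $L^1$-contractivity of $S_n,S_m$ shows that $(S_nf)_n$ is Cauchy in $L^1(\mu)$. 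Let $h\in L^1(\mu)$ be its limit.

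To see that $h\in I^1(\mu)$, take approximating $\tilde f_k\in L^2(\mu)$ with $\tilde f_k\to f$ in $L^1(\mu)$. Then $P_\mu\tilde f_k\in I^2(\mu)\subseteq I^1(\mu)$, and the $L^1$-contractivity bound above implies $P_\mu\tilde f_k\to h$ in $L^1(\mu)$; since $G$ acts by isometries on $L^1(\mu)$, the invariant subspace $I^1(\mu)$ is closed in $L^1(\mu)$, hence $h\in I^1(\mu)$. Finally, if $\mu$ is ergodic, then any $h\in I^1(\mu)$ is almost surely constant. The constant is identified by integrating: since $\mu$ is $G$-invariant, $\mu(S_n f)=\mu(f)$ for every $n$, and passing to the limit (the $L^1$-convergence implies convergence of integrals) gives $h=\mu(f)$ $\mu$-almost everywhere. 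The only step requiring care is the $L^1$-contractivity of $S_n$ in the thin F\o lner setting, but as noted this is immediate from $G$-invariance of $\mu$ and the triangle inequality, so no real obstacle arises.
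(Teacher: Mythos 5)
Your argument is correct and is exactly the standard deduction of the $L^1$ statement from the $L^2$ Mean Ergodic Theorem via $L^1$-contractivity of $S_n$ (from $G$-invariance of $\mu$) and density of $L^2(\mu)$ in $L^1(\mu)$; the paper offers no separate proof of this corollary and implicitly relies on precisely this routine argument. One small caveat: your aside that $g.S_nf=S_n(g.f)$ is false for non-abelian $G$ (the average involves $f(g'gx)$ on one side and $f(gg'x)$ on the other), but since you never use it, the proof is unaffected.
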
 
	
\begin{cor}[{\cite[Theorem 2.4]{FuhrmannGroegerLenz2022}}]\label{cor: almost all points are generic}
    Let $(X,G)$ be a dynamical system with an ergodic measure $\mu$.
    Then every Følner sequence $\Fol$ allows for a subsequence $\Fol'$ 
    such that $\mu$-almost every point is $\mu$-generic along $\mc F'$.
\end{cor}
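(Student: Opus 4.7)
My plan is to combine the $L^1$-Mean Ergodic Theorem (stated just before the corollary) with a standard diagonal/subsequence extraction argument. Since $\mu$ is ergodic, that result gives for every $f\in L^1(\mu)$ the $L^1(\mu)$-convergence $S_nf\to \mu(f)$. The idea is now to pass to a subsequence along which the convergence holds $\mu$-a.e.\ simultaneously for a countable dense family in $C(X)$ and then to leverage separability of $C(X)$ to upgrade this to $\mu$-genericity of $\mu$-almost every point.

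More concretely, first fix a countable dense subset $\{f_k\: k\in\N\}$ of $C(X)$ (which exists because $X$ is compact metrizable and hence $C(X)$ is separable). Applying the $L^1$-Mean Ergodic Theorem to $f_1$, we obtain $S_n f_1 \to \mu(f_1)$ in $L^1(\mu)$, and therefore we can extract a subsequence $\mathcal F^{(1)}$ of $\mathcal F$ along which the convergence takes place $\mu$-a.e. Iterating this for $f_2, f_3, \ldots$, at each step passing to a further subsequence $\mathcal F^{(k+1)}$ of $\mathcal F^{(k)}$ along which $S_n f_{k+1}\to \mu(f_{k+1})$ $\mu$-a.e., and then taking the diagonal sequence $\mathcal F'$, we obtain a single subsequence of $\mathcal F$ along which $S_n f_k(x)\to \mu(f_k)$ $\mu$-a.e.\ for every $k\in\N$. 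Intersecting the countably many full-measure sets yields a Borel set $X_0\ssq X$ with $\mu(X_0)=1$ such that for every $x\in X_0$ and every $k\in\N$, $S_n^{\mc F'}f_k(x)\to \mu(f_k)$.

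It remains to promote pointwise convergence on the countable dense family to pointwise convergence for all of $C(X)$. This is the only step requiring a small argument: observe that each $S_n^{\mc F'}$ is a contraction for the sup-norm, i.e.\ $\|S_n^{\mc F'} f\|_\infty \le \|f\|_\infty$, and the evaluation functional $\mu$ is likewise continuous on $C(X)$. Hence, for any $f\in C(X)$ and any $\eps>0$, choose $f_k$ with $\|f-f_k\|_\infty<\eps$; a standard $3\eps$ estimate gives $\limsup_n |S_n^{\mc F'}f(x)-\mu(f)|\le 2\eps$ for every $x\in X_0$, and letting $\eps\to 0$ yields $S_n^{\mc F'}f(x)\to \mu(f)$ for all $x\in X_0$ and all $f\in C(X)$. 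This is exactly the statement that every $x\in X_0$ is $\mu$-generic along $\mc F'$.

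The main obstacle—if one can call it that—is purely bookkeeping: extracting the subsequence correctly via the diagonal trick and then verifying that the contraction property of the averaging operators lets us pass from a dense subset of $C(X)$ to all continuous functions. There is no genuine difficulty specific to the thin F\o lner setting, because the only ergodic-theoretic input we use, namely $L^1$-convergence of $S_n f$ to $\mu(f)$, has already been established in this generality.
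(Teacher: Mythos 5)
Your argument is correct and is essentially the proof the paper intends: the statement is placed as a corollary of the $L^1$-Mean Ergodic Theorem (with the actual proof deferred to \cite[Theorem 2.4]{FuhrmannGroegerLenz2022}), and the standard route is precisely your combination of $L^1$-convergence of $S_nf$ to $\mu(f)$, a.e.\ convergence along a diagonal subsequence for a countable dense family in $C(X)$, and the sup-norm contraction property of the $S_n$ to upgrade to all of $C(X)$. Nothing in this argument is sensitive to whether $\mc F$ is thin or thick, which matches the paper's remark that the proofs carry over verbatim.
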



\begin{thebibliography}{10}

\bibitem{KryloffBogoliouboff1937}
N.~Kryloff and N.~Bogoliouboff.
\newblock La th{\'e}orie g{\'e}n{\'e}rale de la mesure dans son application
  {\`a} l'{\'e}tude des syst{\`e}mes dynamiques de la m{\'e}canique non
  lin{\'e}aire.
\newblock {\em Annals of Mathematics. Second Series}, 38:65--113, 1937.

\bibitem{DowkerLederer1964}
Y.N. Dowker and G.~Lederer.
\newblock On ergodic measures.
\newblock {\em Proceedings of the American Mathematical Society}, 15(1):65--69,
  1964.

\bibitem{Oxtoby1952}
J.C. Oxtoby.
\newblock Ergodic sets.
\newblock {\em Bulletin of the American Mathematical Society}, 58(2):116--136,
  1952.

\bibitem{KatznelsonWeiss1981}
Y.~Katznelson and B.~Weiss.
\newblock When all points are recurrent/generic.
\newblock In A.~Katok, editor, {\em Ergodic Theory and Dynamical Systems I},
  volume~10, pages 195--210. Birkhäuser, Boston, MA, 1981.

\bibitem{DownarowiczWeiss2020}
T.~Downarowicz and B.~Weiss.
\newblock When all points are generic for ergodic measures.
\newblock {\em Bulletin of the Polish Academy of Sciences}, 68(2):117--132,
  2020.

\bibitem{CaiKwietniakLiPourmand2022}
F.~Cai, D.~Kwietniak, J.~Li, and H.~Pourmand.
\newblock On the properties of the mean orbital pseudo-metric.
\newblock {\em Journal of Differential Equations}, 318:1--19, 2022.

\bibitem{XuZheng2022}
L.~Xu and L.~Zheng.
\newblock {Weak Mean Equicontinuity for a Countable Discrete Amenable Group
  Action}.
\newblock {\em Journal of Dynamics and Differential Equations}, 36:2415--2428, 2024.


\bibitem{LiYeYu2021}
J.~Li, X.~Ye, and T.~Yu.
\newblock Mean equicontinuity, complexity and applications.
\newblock {\em Discrete \& Continuous Dynamical Systems-A}, 41(1):359--393,
  2021.

\bibitem{FuhrmannGroegerLenz2022}
G.~Fuhrmann, M.~Gröger, and D.~Lenz.
\newblock The structure of mean equicontinuous group actions.
\newblock {\em Israel Journal of Mathematics}, 247:75--123, 2022.

\bibitem{SchneiderThom2018}
F.M. Schneider and A.~Thom.
\newblock {On F{\o}lner sets in topological groups}.
\newblock {\em Compositio Mathematica}, 154(7):1333--1361, 2018.

\bibitem{ZhengZheng2020}
L.~Zheng and Z.~Zheng.
\newblock A new metric for statistical properties of long time behaviors.
\newblock {\em Journal of Differential Equations}, 269(4):2741--2773, 2020.

\bibitem{EisnerFarkasHaaseNagel2015}
T.~Eisner, B.~Farkas, M.~Haase, and R.~Nagel.
\newblock {\em Operator theoretic aspects of ergodic theory}, volume 272 of
  {\em Graduate Texts in Mathematics}.
\newblock Springer Cham, 2015.

\bibitem{Villani2003}
C.~Villani.
\newblock {\em {Topics in Optimal Transportation}}, volume~58 of {\em Graduate
  Studies in Mathematics}.
\newblock American Mathematical Society, 2003.

\bibitem{EmersonGreenleaf1967}
W.R. Emerson and F.P. Greenleaf.
\newblock {Covering properties and F{\o}lner conditions for locally compact
  groups}.
\newblock {\em Mathematische Zeitschrift}, 102:370--384, 1967.

\bibitem{GiordanoPestov2002}
T.~Giordano and V.G. Pestov.
\newblock Some extremely amenable groups.
\newblock {\em Comptes Rendus Mathematique}, 334(4):273--278, 2002.

\bibitem{Pestov1998}
V.G. Pestov.
\newblock On free actions, minimal flows, and a problem by {Ellis}.
\newblock {\em Transactions of the American Mathematical Society},
  350(10):4149--4165, 1998.

\bibitem{MontgomeryZippin2018}
D.~Montgomery and L.~Zippin.
\newblock {\em Topological Transformation Groups}.
\newblock Dover Publications Inc., 2018.

\bibitem{Carothers2000}
N.L. Carothers.
\newblock {\em Real analysis}.
\newblock Cambridge University Press, 2000.

\bibitem{Furstenberg1981}
H.~Furstenberg.
\newblock {\em Recurrence in Ergodic Theory and Combinatorial Number Theory}.
\newblock M.B. Porter lectures. Princeton University Press, Princeton, New
  Jersey, 1981.

\bibitem{Phelps2001}
R.R. Phelps.
\newblock {\em Lectures on Choquet's Theorem}.
\newblock Lecture Notes in Mathematics. Springer Berlin, Heidelberg, 2 edition,
  2001.

\bibitem{Farrell1962}
R.H. Farrell.
\newblock Representation of invariant measures.
\newblock {\em Illinois Journal of Mathematics}, 6(3):447--467, 1962.

\bibitem{Rao2005}
M.M. Rao.
\newblock {\em Conditional measures and applications.}, volume 271 of {\em Pure
  and Applied Mathematics}.
\newblock Chapman \& Hall/CRC, 2nd ed. edition, 2005.

\bibitem{Kelley1958}
J.L. Kelley.
\newblock Averaging operators on {$C_ \infty (X)$}.
\newblock {\em Illinois Journal of Mathematics}, 2(2):214--223, 1958.

\bibitem{Seever1966}
G.L. Seever.
\newblock Nonnegative projections on {$C_0(X)$}.
\newblock {\em Pacific Journal of Mathematics}, 17(1):159--166, 1966.

\bibitem{Hadded2007}
F.~Hadded.
\newblock Seever operators on {$C_0(X)$}.
\newblock {\em Demonstratio Mathematica}, 40(3):631--638, 2007.

\bibitem{Lloyd1963}
S.P. Lloyd.
\newblock On certain projections in spaces of continuous functions.
\newblock {\em Pacific Journal of Mathematics}, 13:171--175, 1963.

\bibitem{BonanomeDeanDean2018}
M.C. Bonanome, M.H. Dean, and J.P. Dean.
\newblock {\em A Sampling of Remarkable Groups}.
\newblock Compact Textbooks in Mathematics. Birkhäuser Cham, 2018.

\bibitem{EinsiedlerWard2011}
M.~Einsiedler and T.~Ward.
\newblock {\em Ergodic Theory: With a View Towards Number Theory}, volume 259
  of {\em Graduate Texts in Mathematics}.
\newblock Springer, 2011.

\end{thebibliography}
\end{document}